\newtheorem{thm}{\bf{Theorem}}[section]
\newtheorem{rem}{\bf{Remark}}[section]
\newtheorem{assum}{\bf{Assumption}}[section]
\newtheorem{cor}{\bf{Corollary}}[section]
\newcommand{\nn}{\nonumber}
\newcommand{\vMz}{\mathbf{M}_0}
\newcommand{\vmm}{\mathbf{m}}
\newcommand{\vmt}{\widetilde{\mathbf{m}}}
\newcommand{\vxx}{\mathbf{x}}
\newcommand{\vww}{\mathbf{w}}
\newcommand{\vyy}{\mathbf{y}}
\newcommand{\vuu}{\mathbf{u}}
\newcommand{\ver}{\mathbf{e}}
\newcommand{\vvv}{\mathbf{v}}
\newcommand{\vb}{\mathbf{b}}
\DeclareMathOperator{\tr}{tr}
\let\turc\c
\begin{document}
\sloppy

\title{Dynamic Signaling Games with Quadratic Criteria under Nash and Stackelberg Equilibria\footnote{This research was supported in part by the Natural Sciences and Engineering Research Council of Canada (NSERC), and the Scientific and Technological Research Council of Turkey (T\"{U}B\.{I}TAK). Part of this work was presented at the 2016 IEEE International Symposium on Information Theory (ISIT), Barcelona, Spain, 2016 \cite{isitDynamic}, and at the 2017 American Control Conference (ACC), Seattle, WA, 2017 \cite{acc2017}.}} 

\author{Serkan~Sar{\i}ta\c{s}$^1$ \and Serdar~Y\"uksel$^2$ \and Sinan~Gezici$^3$}
\date{%
	$^1$Division of Decision and Control Systems, KTH Royal Institute of Technology, SE-10044, Stockholm, Sweden. Email: saritas@kth.se.\\%
	$^2$Department of Mathematics and Statistics, Queen's University, K7L 3N6, Kingston, Ontario, Canada.  Email: yuksel@mast.queensu.ca.\\
	$^3$Department of Electrical and Electronics Engineering, Bilkent University, 06800, Ankara, Turkey. Email: gezici@ee.bilkent.edu.tr.\\[2ex]%
}


\maketitle



\begin{abstract}
This paper considers dynamic (multi-stage) signaling games involving an encoder and a decoder who have subjective models on the cost functions. We consider both Nash (simultaneous-move) and Stackelberg (leader-follower) equilibria of dynamic signaling games under quadratic criteria. For the multi-stage scalar cheap talk, we show that the final stage equilibrium is always quantized and under further conditions the equilibria for all time stages must be quantized. In contrast, the Stackelberg equilibria are always fully revealing. In the multi-stage signaling game where the transmission of a Gauss-Markov source over a memoryless Gaussian channel is considered, affine policies constitute an invariant subspace under best response maps for Nash equilibria; whereas the Stackelberg equilibria always admit linear policies for scalar sources but such policies may be non-linear for multi-dimensional sources. We obtain an explicit recursion for optimal linear encoding policies for multi-dimensional sources, and derive conditions under which Stackelberg equilibria are informative.
\end{abstract}


\section{Introduction}

Signaling games and cheap talk are concerned with a class of Bayesian games where an informed player (encoder or sender) transmits information to another player (decoder or receiver). In these problems, the objective functions of the players are not aligned unlike the ones in the classical communication problems. The single-stage cheap talk problem was studied by Crawford and Sobel \cite{SignalingGames}, who obtained the surprising result that under some technical conditions on the cost functions, the cheap talk problem only admits equilibria that involve quantized encoding policies. This is in contrast to the usual communication/information theoretic case where the goals are aligned. In this paper, we consider multi-stage signaling games. The details are presented in the following.

A single-stage (static) {\it cheap talk} problem can be formulated as follows: An informed player (encoder) knows the value of the $\mathbb{M}$-valued random variable $M$ and transmits the $\mathbb{X}$-valued random variable $X$ to another player (decoder), who generates his $\mathbb{M}$-valued optimal decision $U$ upon receiving $X$. The policies of the encoder and decoder are assumed to be deterministic; i.e., $x=\gamma^e(m)$ and $u=\gamma^d(x)=\gamma^d(\gamma^e(m))$. Let $c^e(m,u)$ and $c^d(m,u)$ denote the cost functions of the encoder and the decoder, respectively, when the action $u$ is taken for the corresponding message $m$. Then, given the encoding and decoding policies, the encoder's induced expected cost is $J^e\left(\gamma^e,\gamma^d\right) = \mathbb{E}\left[c^e(m, u)\right]$, whereas, the decoder's induced expected cost is $J^d\left(\gamma^e,\gamma^d\right) = \mathbb{E}\left[c^d(m, u)\right]$.
If the transmitted signal $x$ is also an explicit part of the cost functions $c^e$ and/or $c^d$, then the communication between the players is not costless and the formulation turns into a {\it signaling game} problem. Such problems are studied under the tools and concepts provided by {\it game theory} since the goals are not aligned. Although the encoder and decoder act sequentially in the game as described above, how and when the decisions are made and the nature of the commitments to the announced policies significantly affect the analysis of the equilibrium structure. Here, two different types of equilibria are investigated: the {\it Nash equilibrium}, in which the encoder and the decoder make simultaneous decisions, and the {\it Stackelberg equilibrium}, in which the encoder and the decoder make sequential decisions where the encoder is the leader and the decoder is the follower\footnote{\label{note1} For the Nash equilibrium, the encoder and the decoder take actions sequentially but do not announce their policies beforehand (i.e., they announce simultaneously) while they do so for the Stackelberg equilibrium.}. In this paper, the terms {\it Nash game} and the {\it simultaneous-move game}\footnote{Note that throughout the manuscript, {\it simultaneous-move} refers to {\it simultaneous-announcement}, see\textsuperscript{ $1$}.} will be used interchangeably, and similarly, the  {\it Stackelberg game} and the {\it leader-follower game} will be used interchangeably. 

In the simultaneous-move game, the encoder and the decoder announce their policies at the same time, and a pair of policies $(\gamma^{*,e}, \gamma^{*,d})$ is said to be a {\bf Nash equilibrium} \cite{basols99} if
\begin{align}
\begin{split}
J^e(\gamma^{*,e}, \gamma^{*,d}) &\leq J^e(\gamma^{e}, \gamma^{*,d}) \quad \forall \gamma^e \in \Gamma^e \,,\\
J^d(\gamma^{*,e}, \gamma^{*,d}) &\leq J^d(\gamma^{*,e}, \gamma^{d}) \quad \forall \gamma^d \in \Gamma^d \,,
\label{eq:nashEquilibrium}
\end{split}
\end{align}
where $\Gamma^e$ and $\Gamma^d$ are the sets of all deterministic (and Borel measurable) functions from $\mathbb{M}$ to $\mathbb{X}$ and from $\mathbb{X}$ to $\mathbb{M}$, respectively. As observed from the definition \eqref{eq:nashEquilibrium}, under the Nash equilibrium, each individual player chooses an optimal strategy given the strategies chosen by the other players.

On the other hand, in a leader-follower game, the leader (encoder) commits to and announces his optimal policy before the follower (decoder) does, the follower observes what the leader is committed to before choosing and announcing his optimal policy, and a pair of policies $(\gamma^{*,e}, \gamma^{*,d})$ is a {\bf Stackelberg equilibrium} \cite{basols99} if
\begin{align}
\begin{split}
&J^e(\gamma^{*,e}, \gamma^{*,d}(\gamma^{*,e})) \leq J^e(\gamma^e, \gamma^{*,d}(\gamma^e)) \quad \forall \gamma^e \in \Gamma^e \,,\\
&\hspace{-0.25cm} \text{where } \gamma^{*,d}(\gamma^e) \text{ satisfies} \\
&J^d(\gamma^{e}, \gamma^{*,d}(\gamma^{e})) \leq J^d(\gamma^{e}, \gamma^d(\gamma^{e})) \quad \forall \gamma^d \in \Gamma^d  \,.
\label{eq:stackelbergEquilibrium}
\end{split}
\end{align}
As it can be seen from the definition \eqref{eq:stackelbergEquilibrium}, the decoder takes his optimal action $\gamma^{*,d}(\gamma^{e})$ after observing the policy of the encoder $\gamma^{e}$. In the Stackelberg game, the leader cannot backtrack on his commitment, but has a leadership role since he can manipulate the follower by anticipating follower's actions; Bayesian persuasion games have a similar spirit \cite{bayesianPersuasion}. We provide a further literature review on this later. 



If an equilibrium is achieved when $\gamma^{*,e}$ is non-informative (e.g., the transmitted message and the source are independent) and $\gamma^{*,d}$ uses only the prior information (since the received message is useless), then we call such an equilibrium a {\it non-informative (babbling) equilibrium}, which always exists for cheap talk \cite{SignalingGames}.

Heretofore, only \textit{single-stage game}s are considered. If a game is played over a number of time periods, the game is called a \textit{multi-stage game}. In this paper, with the term {\it dynamic}, we will refer to \textit{multi-stage game} setups, which also has been the usage in the prior literature \cite{dynamicStrategicInfoTrans}; even though strictly speaking a single stage setup may also be viewed to be dynamic  \cite{YukselBasarBook} since the information available to the decoder is totally determined by encoder's actions. 

Let $m_{[0,N-1]}=\{m_0, m_1, \ldots , m_{N-1}\}$ be a collection of random variables to be encoded sequentially (causally) to a decoder. At the $k$-th stage of an $N$-stage game, the encoder knows $\mathcal{I}_k^e = \{m_{[0,k]}, x_{[0,k-1]} \}$ with $\mathcal{I}_0^e = \{m_0 \}$, and transmits $x_k$ to the decoder who generates his optimal decision by knowing $\mathcal{I}_k^d = \{x_{[0,k]} \}$. Thus, under the policies considered, $x_k=\gamma_k^e(\mathcal{I}_k^e)$ and $u_k=\gamma_k^d(\mathcal{I}_k^d)$. The encoder's goal is to minimize
\begin{equation}
J^e\left(\gamma^e_{[0,N-1]},\gamma^d_{[0,N-1]}\right) = \mathbb{E}\left[\sum_{k=0}^{N-1}c_k^e(m_k, u_k) \right]\,,
\label{eq:dynamicEncCost}
\end{equation}
whereas the decoder's goal is to minimize
\begin{equation}
J^d\left(\gamma^e_{[0,N-1]},\gamma^d_{[0,N-1]}\right) = \mathbb{E}\left[\sum_{k=0}^{N-1}c_k^d(m_k, u_k)\right]
\label{eq:dynamicDecCost}
\end{equation}
by finding the optimal policy sequences $\gamma^{*,e}_{[0,N-1]} = \{\gamma_0^{*,e}, \gamma_1^{*,e}, \cdots, \gamma_{N-1}^{*,e}\}$ and $\gamma^{*,d}_{[0,N-1]} = \{\gamma_0^{*,d}, \gamma_1^{*,d}, \cdots, \gamma_{N-1}^{*,d}\}$, respectively. Using the encoder cost in \eqref{eq:dynamicEncCost} and the decoder cost in \eqref{eq:dynamicDecCost}, the Nash equilibrium and the Stackelberg equilibrium for multi-stage games can be defined similarly as in \eqref{eq:nashEquilibrium} and \eqref{eq:stackelbergEquilibrium}, respectively. 

Under both equilibria concepts, we consider the setups where the decision makers act optimally for each history path of the game (available to each decision maker) and the updates are Bayesian; thus the equilibria are to be interpreted under a {\it perfect Bayesian equilibria} concept. Since we assume such a (perfect Bayesian) framework, the equilibria lead to sub-game perfection and each decision maker performs optimal Bayesian decisions for every realized play path. For example, more general Nash equilibrium scenarios such as non-credible threats or equilibria that are not strong time-consistent \cite[Definition 2.4.1]{YukselBasarBook} may not be considered. We also note that both Nash and Stackelberg equilibrium concepts find various applications depending on the assumptions on the leader, that is, the encoder, in view of the commitment conditions \cite{NashvsStackelberg}.

In this paper, the quadratic cost functions are assumed; i.e., $c_k^e(m_k, u_k)=(m_k-u_k-b)^2$ and $c_k^d(m_k, u_k)=(m_k-u_k)^2$ where $b$ is the bias term as in \cite{SignalingGames} and \cite{tacWorkArxiv}. 


\subsection{Literature Review}

As noted in \cite{tacWorkArxiv}, for team problems, although it is difficult to obtain optimal solutions under general information structures, it is apparent that more information provided to any of the decision makers does not negatively affect the utility of the players. There is also a well-defined partial order of information structures as studied by Blackwell \cite{Blackwell} and \cite{YukselBasarBook}. However, for general zero-sum or non-zero-sum game problems, informational aspects are very challenging to address; more information can have negative effects on some or even all of the players in a system, see e.g. \cite{hirshleifer1971private}. 

The cheap talk and signaling game problems are applicable in networked control systems when a communication channel exists among competitive and non-cooperative decision makers. For example, in a smart grid application, there may be strategic sensors in the system \cite{misBehavingAgents} that wish to change the equilibrium for their own interests through reporting incorrect measurement values, e.g., to enhance prolonged use in the system. 
For further applications, see \cite{misBehavingAgents,csLloydMax}. All of these applications lead to a new framework where the value of information and its utilization have a drastic impact on the system under consideration.

The reader is referred to \cite{tacWorkArxiv} for further discussion and references on single-stage signaling games. On the multi-stage side, much of the literature has focused on Stackelberg equilibria as we note below. A notable exception is \cite{dynamicStrategicInfoTrans}, where the multi-stage extension of the setup of Crawford and Sobel is analyzed for a source which is a fixed random variable distributed according to some density on $[0,1]$ (see Theorem~\ref{rem:golosovExplanation} for a detailed discussion on this very relevant paper). \cite{frenchGroup} considers the information design problem (which is originated from the Bayesian persuasion game \cite{bayesianPersuasion}) between an encoder and a decoder with non-aligned utility functions under the Stackelberg equilibrium. For the case in which the non-alignment between the cost functions of the encoder and the decoder (i.e., the bias term $b$) is a function of a Gaussian random variable (r.v.) correlated with the Gaussian source and secret to the decoder (contrarily to the original case in which it is fixed and known to the decoder \cite{SignalingGames}, which is also studied in \cite{tacWorkArxiv} and in this manuscript)\footnote{Since we assume a fixed and public $b$ in contrast to a private and random $b$ which is correlated with the source as in \cite{CedricWork, akyolITapproachGame,omerHierarchial}, the results obtained in the former setup cannot be applied directly to the latter one; i.e., the Stackelberg equilibria of these two setups are different.}, the Stackelberg equilibrium is investigated in \cite{CedricWork, akyolITapproachGame,omerHierarchial}. It is shown that the best response of the transmitter is affine by restricting receiver strategies to be affine when the communication is noiseless \cite{CedricWork}, whereas the optimality of linear sender strategies is proved within the general class of policies even with additive Gaussian noise channels \cite{akyolITapproachGame}. The multi-stage Gaussian signaling game under general quadratic cost functions is studied in \cite{omerHierarchial} and it is shown that linear encoder and decoder strategies can achieve the Stackelberg equilibrium under a finite horizon when the private state of the encoder is a controlled Gauss-Markov process. \cite{VasalAnastasopoulos} investigates the Nash equilibrium of a multi-stage linear quadratic Gaussian game with asymmetric information, and it is shown that under certain conditions, players' strategies are linear in their private types. The cheap talk with finite state and action spaces and multiple round of pre-play communication is investigated where both the encoder and decoder take costly actions at the end of the pre-play communication in \cite{longCheapTalk}, and it is proved that the multiple round of communication improves information revelation. The dynamic extension to the optimal information disclosure \cite{bayesianPersuasion} with a finite state space and a finite number of periods is considered in \cite{suspenseSurprise}, in which the sender commits to a policy similar to the Stackelberg case. In our earlier work \cite{tacWorkArxiv}, we considered both Nash equilibria and Stackelberg equilibria. In this paper, we build on \cite{tacWorkArxiv}, and extend the analysis to the multi-stage case. 


\subsection{Contributions} 

\begin{itemize}
	\item[(i)] We show that in the multi-stage cheap talk game under Nash equilibria, the last stage equilibria are quantized for scalar i.i.d. and Markov sources with arbitrary conditional probability measures and not fully revealing for multi-dimensional sources, whereas the equilibrium must be fully revealing in the multi-stage cheap talk game under Stackelberg equilibria for both scalar and multi-dimensional sources. Further, for scalar i.i.d. sources, the quantized nature of the Nash equilibrium for all stages is established under mild conditions. 
	\item[(ii)] For the multi-stage signaling game under Nash equilibria, it is shown that affine encoder and decoder policies constitute an invariant subspace under best response dynamics. We provide conditions for the existence of informative Stackelberg equilibria for scalar Gauss-Markov sources and scalar Gaussian channels where we also show that Stackelberg equilibria are always linear for scalar sources and channels, which is not always the case for multi-dimensional setups. For multi-dimensional setups, a dynamic programming formulation is presented for Stackelberg equilibria when the encoders are linear.
\end{itemize}

\section{Multi-Stage Cheap Talk}

For the purpose of illustration, the system model of the $2$-stage cheap talk is depicted in Fig.~\ref{figure:allScheme}-(\subref{figure:noiselessScheme}). Before presenting the technical results related to the multi-stage cheap talk, we provide the result on the static (single-stage) cheap talk with deterministic encoders from our previous study, which will be utilized in several results later on this paper.

\begin{figure}
	\centering
	\null\hfill
	\begin{subfigure}[t]{0.48\textwidth}
		\centering
		\includegraphics[scale=0.54]{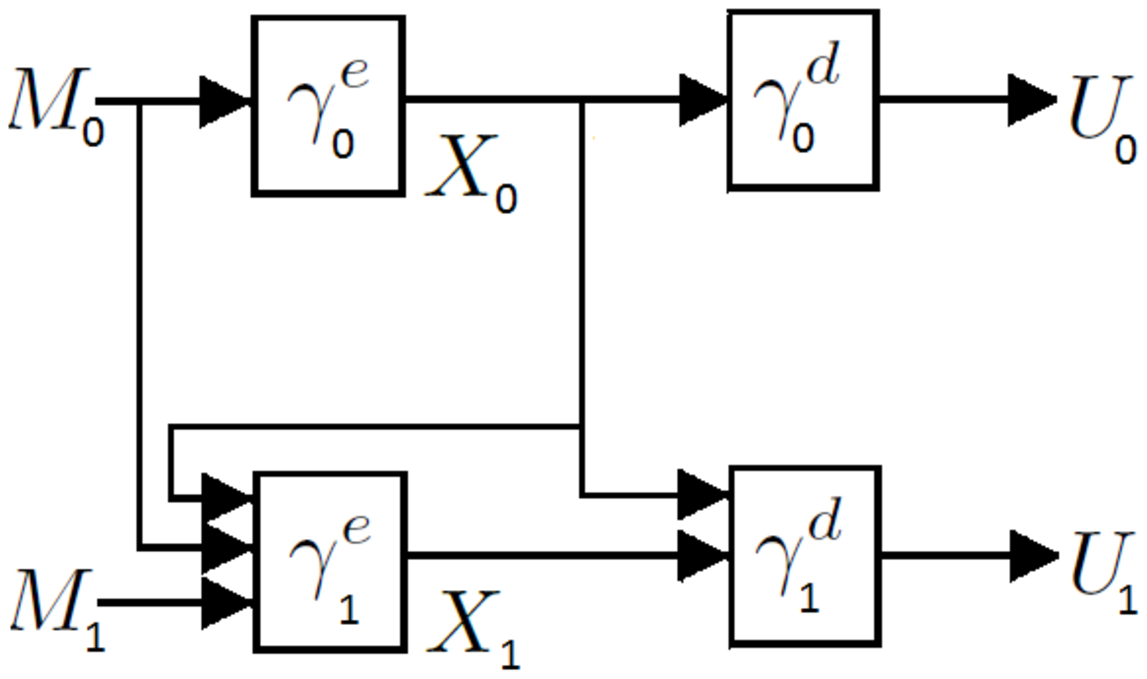}
		\caption{$2$-stage cheap talk.}
		\label{figure:noiselessScheme}
	\end{subfigure}
	\hfill
	\begin{subfigure}[t]{0.48\textwidth}
		\centering
		\includegraphics[scale=0.58]{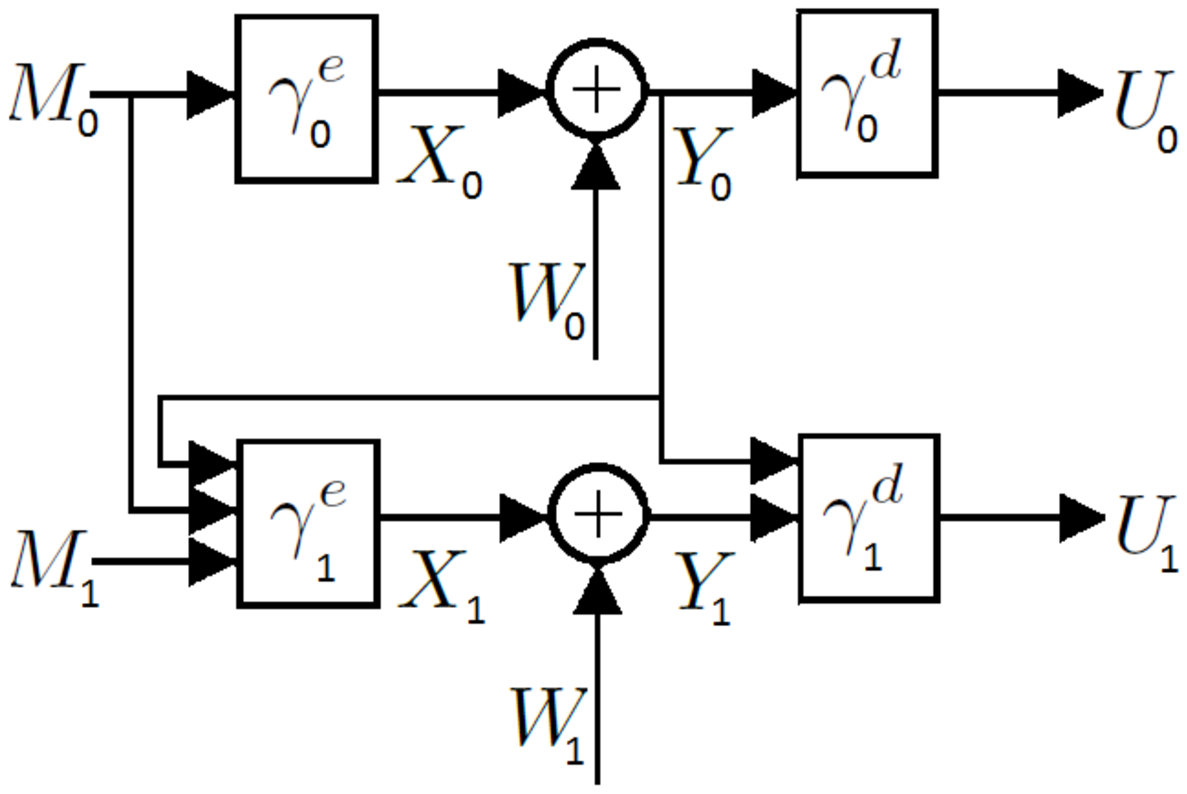}
		\caption{$2$-stage signaling game.}
		\label{figure:genScheme}
	\end{subfigure}
	\hfill\null
	\caption{Signaling game models.}
	\label{figure:allScheme}
\end{figure}

\begin{thm} \cite[Theorem 3.2]{tacWorkArxiv} \label{noiselessDiscrete}
	Let the strategy set of the encoder consists of the set of all measurable (deterministic) functions from $\mathbb{M}$ to $\mathbb{X}$. Then, an equilibrium encoder policy has to be quantized almost surely, that is, it is equivalent to a quantized policy for the encoder in the sense that the performance of any equilibrium encoder policy is equivalent to the performance of a quantized encoder policy. Furthermore, the quantization bins are convex.
\end{thm}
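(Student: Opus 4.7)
The plan is to exploit the two best--response conditions in turn, and then use the Voronoi--type structure induced by the quadratic cost to obtain convex quantization cells.

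First, I would fix any equilibrium pair $(\gamma^{*,e},\gamma^{*,d})$ and analyze the decoder side. Since $c^d(m,u)=(m-u)^2$, the decoder's optimality condition in \eqref{eq:nashEquilibrium} forces $\gamma^{*,d}(x)=\mathbb{E}[M\mid X=x]$ almost surely under the joint law induced by $\gamma^{*,e}$. Let $U^{*}:=\{\gamma^{*,d}(x):x\in\mathbb{X}\}$ be the (essential) range of the decoder's policy, i.e.\ the set of action values that the encoder can effectively induce. Because the encoder's cost $(m-u-b)^2$ depends on the message $x$ only through $\gamma^{*,d}(x)$, the encoder's best--response problem is equivalent to choosing an action $u\in U^{*}$ that minimizes $(m-u-b)^2$ for each realization $m$.

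Next I would convert this into a partitioning statement. For each $u\in U^{*}$, define
\begin{equation*}
B_u \;:=\; \{m\in\mathbb{M}:(m-u-b)^2 \le (m-u'-b)^2 \text{ for all } u'\in U^{*}\}.
\end{equation*}
At equilibrium, the encoder must map each $m$ into some $x$ with $\gamma^{*,d}(x)=u$ for some $u$ with $m\in B_u$; otherwise the encoder could strictly reduce its cost. Thus the composition $\gamma^{*,d}\circ\gamma^{*,e}$ takes values only in $U^{*}$, and for every equilibrium action $u\in U^{*}$ the preimage $\{m:\gamma^{*,d}(\gamma^{*,e}(m))=u\}$ is contained in $B_u$ (and covers it up to ties, which form a null set in any reasonable setting and can be assigned by a measurable tie--breaking rule).

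Finally, the convexity of the cells follows directly from the quadratic structure. The inequality $(m-u-b)^2\le (m-u'-b)^2$ is equivalent to the half--space condition $2(u-u')(m-b)\ge u^2-u'^2$. Hence each $B_u$ is an intersection of (possibly uncountably many) closed half--spaces in $\mathbb{M}$ and is therefore convex; in the scalar case these become intervals. Consequently the equilibrium encoder is, up to a null set, a measurable function that is constant on each convex cell of a partition of $\mathbb{M}$, i.e.\ a quantizer, with the same induced joint distribution of $(M,U)$ and hence the same equilibrium cost as the original $\gamma^{*,e}$.

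The main technical obstacle I anticipate is the measurability/tie--breaking step: the set $U^{*}$ can in principle be an arbitrary Borel subset of $\mathbb{R}$, and ensuring that one can choose a Borel measurable quantizer realizing the best response (and that the ``tie'' boundaries between adjacent cells have measure zero under the prior on $M$) requires a careful selection argument; everything else reduces to the quadratic--cost Voronoi calculation above.
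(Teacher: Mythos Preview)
Your argument establishes only half of what is needed. You correctly derive the Voronoi structure: at equilibrium, each $m$ is mapped to an action $u\in U^{*}$ minimizing $(m-u-b)^2$, and the resulting cells $B_u$ are convex (intervals in the scalar case). But this by itself does \emph{not} make the encoder a quantizer. Nothing in your argument rules out that $U^{*}$ is an uncountable set---for instance, if $U^{*}$ were a full interval, each $B_u$ would be a singleton and your ``quantizer'' would just be a translation of the identity map. The heart of the theorem is precisely that $U^{*}$ is discrete (at most countable), and that step is missing from your proposal.

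The paper's argument (visible in the proof of Theorem~\ref{thm:randCheapTalk} and reused in Theorem~\ref{thm:encoderAllQuantized}) closes this gap by combining the encoder's best response with the decoder's. Take two distinct equilibrium actions $u^{\alpha}<u^{\beta}$ with bins $\mathcal{B}^{\alpha},\mathcal{B}^{\beta}$. The encoder's indifference boundary gives $m^{\alpha}<\tfrac{u^{\alpha}+u^{\beta}}{2}+b<m^{\beta}$ for all $m^{\alpha}\in\mathcal{B}^{\alpha}$, $m^{\beta}\in\mathcal{B}^{\beta}$. Now invoke the decoder's best response, $u^{\alpha}=\mathbb{E}[M\mid M\in\mathcal{B}^{\alpha}]$ and $u^{\beta}=\mathbb{E}[M\mid M\in\mathcal{B}^{\beta}]$, to conclude $u^{\alpha}<\tfrac{u^{\alpha}+u^{\beta}}{2}+b<u^{\beta}$, i.e.\ $u^{\beta}-u^{\alpha}>2|b|$. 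Hence any two distinct decoder actions are separated by at least $2|b|$, so $U^{*}$ is at most countable and the encoder is genuinely quantized. Your half-space calculation recovers the convexity of bins but you never feed the decoder's centroid condition back in, which is the step that produces discreteness.
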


To facilitate our analysis to handle certain intricacies that arise due to the multi-stage setup in this paper, in the following, we state that the result in Theorem~\ref{noiselessDiscrete} also holds when the encoder is allowed to adapt randomized encoding policies by extending \cite[Lemma 1]{SignalingGames} as follows:

\begin{thm} \label{thm:randCheapTalk}
	The conclusion of Theorem~\ref{noiselessDiscrete} holds if the policy space of the encoder is extended to the set of all stochastic kernels from $\mathbb{M}$ to $\mathbb{X}$ for any arbitrary source.\footnote{$P$ is a stochastic kernel from $\mathbb{M}$ to $\mathbb{X}$ if $P(\cdot|m)$ is a probability measure on ${\mathcal B}(\mathbb{X})$ for every $m\in \mathbb{M}$, and $P(A|\cdot)$ is a Borel measurable function of $m$ for every $A\in{\mathcal B}(\mathbb{X})$.} That is, even when the encoder is allowed to use private randomization, all equilibria are equivalent to those that are attained by quantized equilibria. 
\end{thm}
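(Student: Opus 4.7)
The plan is to reduce any randomized equilibrium to a deterministic one that achieves the same expected encoder and decoder costs, and then apply Theorem~\ref{noiselessDiscrete} to conclude equivalence to a quantized encoder with convex bins. This is the strategy behind \cite[Lemma 1]{SignalingGames}, which I will adapt to the present setting with an arbitrary source distribution on $\mathbb{M}$.

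First I will fix an equilibrium encoder kernel $P(dx|m)$. Since $c^d(m,u)=(m-u)^2$ is strictly convex in $u$, the decoder's unique best response given the induced joint law of $(m,x)$ is $\gamma^d(x)=\mathbb{E}[m\mid X=x]$, a Borel-measurable function defined $P_X$-almost surely. Let $\mathcal{A}$ denote the closure in $\mathbb{R}$ of the essential range of $\gamma^d$. The encoder's conditional cost given $m$, namely $\int (m-\gamma^d(x)-b)^2\,P(dx|m)$, depends on $x$ only through $u=\gamma^d(x)$; hence equilibrium forces $P(\cdot|m)$ to be supported, for $P_M$-almost every $m$, on $\{x:\gamma^d(x)\in A(m)\}$ with $A(m)=\arg\min_{u\in\mathcal{A}}(m-u-b)^2$. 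Because $\mathcal{A}$ is closed and the squared-loss objective is strictly convex and coercive, $A(m)$ is nonempty and equals a singleton except on an at most countable tie-set of midpoints between neighboring elements of $\mathcal{A}$.

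Next, a Borel measurable selection theorem of Kuratowski-Ryll-Nardzewski type produces a single-valued selection $u^\star(m)\in A(m)$, and choosing one representative $x_u\in(\gamma^d)^{-1}(u)$ for each $u$ in the image of $u^\star$ yields a deterministic encoder $\tilde\gamma^e(m)=x_{u^\star(m)}$. By the support property just noted, the posterior of $m$ given $\{x:\gamma^d(x)=u\}$ under the original kernel coincides with the conditional law of $m$ given $m\in B_u:=\{m':u^\star(m')=u\}$, so $u=\mathbb{E}[m\mid m\in B_u]$; hence the decoder observing $x_u$ under $\tilde\gamma^e$ still sees posterior mean $u$, and $\gamma^d$ remains a best response on the image of $\tilde\gamma^e$. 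The encoder also has no profitable deviation, since $\gamma^d$ takes values in $\mathcal{A}$ and $u^\star(m)$ already minimizes $(m-u-b)^2$ over $\mathcal{A}$. The joint law of $(m,u)$ is preserved, expected costs are identical, and Theorem~\ref{noiselessDiscrete} applied to $(\tilde\gamma^e,\gamma^d)$ yields equivalence to a quantized policy with convex bins; the cost equivalence transfers this conclusion back to the original randomized equilibrium.

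The main obstacle will be the measurable-selection step: ensuring that $u^\star$ and the lift $m\mapsto x_{u^\star(m)}$ can be chosen as Borel functions, and disposing of the countable tie-set where $A(m)$ has two elements without affecting expected costs. Both are standard once one notes that $m\mapsto A(m)$ is closed-valued and upper hemicontinuous on the closed set $\mathcal{A}$, so Kuratowski-Ryll-Nardzewski applies; the tie-set receives zero mass under any non-atomic source and can be resolved pointwise at atoms. The remainder is routine manipulation of expectations and does not exploit any special structure of the source distribution, which is why the extension from the continuous-density case of \cite[Lemma 1]{SignalingGames} to an arbitrary source goes through.
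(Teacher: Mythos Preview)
Your proposal is correct but takes a more elaborate route than the paper's own proof. The paper's argument is essentially a one-line observation: the key step in Theorem~\ref{noiselessDiscrete}---that any two decoder actions $u^\alpha,u^\beta$ arising in equilibrium must satisfy $|u^\alpha-u^\beta|\geq 2|b|$---carries over verbatim to randomized encoders. Indeed, if the kernel $P(\cdot|m^\alpha)$ places mass on $\{x:\gamma^d(x)=u^\alpha\}$ and $P(\cdot|m^\beta)$ on $\{x:\gamma^d(x)=u^\beta\}$, the encoder's best-response conditions still read $(m^\alpha-u^\alpha-b)^2\le(m^\alpha-u^\beta-b)^2$ and $(m^\beta-u^\beta-b)^2\le(m^\beta-u^\alpha-b)^2$, and the $2|b|$ gap follows exactly as before. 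Hence the induced action set is at most countable and the equilibrium is quantized; no measurable selection is needed.

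Your approach instead constructs an equivalent deterministic equilibrium and then invokes Theorem~\ref{noiselessDiscrete} as a black box. This is valid and more modular---you get a reusable reduction lemma---but it costs you the Kuratowski--Ryll-Nardzewski machinery and some care that the paper's direct argument sidesteps. One point you should tighten: the representative $x_u\in(\gamma^d)^{-1}(u)$ is only guaranteed to exist if $u$ is actually in the range of $\gamma^d$, not merely in the closure $\mathcal{A}$ of its essential range. This does hold in equilibrium (off the tie set the unique minimizer over $\mathcal{A}$ coincides with the minimizer over $\mathrm{range}(\gamma^d)$ that the original kernel already uses, since the encoder's best response must be achieved), but it deserves a sentence. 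With that addition your argument goes through; the paper simply chooses the shorter path of redoing the $2|b|$ inequality directly.
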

\begin{proof}
\cite[Lemma 1]{SignalingGames} proves that all equilibria with a randomized encoder have finitely many partitions when the source has bounded support. Theorem~\ref{noiselessDiscrete} extends this result to a countable number of partitions (i.e., distinct decoder actions must differ by at least $2|b|$) for any source with an arbitrary probability measure, but assuming a deterministic encoder. The proof methods in both of \cite[Lemma 1]{SignalingGames} and Theorem~\ref{noiselessDiscrete} can be combined, which implies that equilibria must be quantized.
\end{proof}

Theorem~\ref{thm:randCheapTalk} will be used crucially in the following analysis; since in a multi-stage game, at a given time stage, the source variables from the earlier stages can serve as private randomness for the encoder. As a prelude to the more general Markov source setup, we first analyze the multi-stage cheap talk game with an i.i.d. scalar source.

\subsection{Multi-Stage Game with an i.i.d. Source}

\begin{thm}\label{thm:encoderNQuantized}
	In the $N$-stage repeated cheap talk game, the equilibrium policies for the final stage encoder must be quantized for any collection of policies $\left(\gamma^e_{[0,N-2]}, \gamma^d_{[0,N-2]} \right)$ and for any real-valued source model with arbitrary probability measure $P(\mathrm{d}m_{N-1})$.	
\end{thm}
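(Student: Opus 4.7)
The plan is to reduce the stage $N-1$ subgame to an instance of the single-stage cheap talk problem and then invoke Theorem~\ref{thm:randCheapTalk}. First, fix arbitrary policies $\gamma^e_{[0,N-2]}$ and $\gamma^d_{[0,N-2]}$ for the first $N-1$ stages. The encoder's total cost decomposes as $\sum_{k=0}^{N-2} c_k^e(m_k,u_k) + (m_{N-1}-u_{N-1}-b)^2$, and since each $u_k$ for $k<N-1$ is determined by $x_{[0,k]}$, none of the earlier summands depends on $x_{N-1}$. Hence at stage $N-1$ the encoder's best-response problem is simply to minimize $\mathbb{E}[(m_{N-1}-u_{N-1}-b)^2 \mid \mathcal{I}_{N-1}^e]$, while the decoder's best response minimizes $\mathbb{E}[(m_{N-1}-u_{N-1})^2 \mid \mathcal{I}_{N-1}^d]$.

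Next I would condition on an arbitrary realization of the past transmissions $x_{[0,N-2]}=\hat{x}_{[0,N-2]}$, which is common knowledge at stage $N-1$. Under this conditioning, the decoder's stage-$N-1$ policy is a Borel measurable function of $x_{N-1}$ alone, while the encoder's stage-$N-1$ policy $\gamma^e_{N-1}(m_{[0,N-1]}, \hat{x}_{[0,N-2]})$ may be viewed as a stochastic kernel from $m_{N-1}$ to $x_{N-1}$ whose randomness is supplied by $m_{[0,N-2]}$ (and possibly further exogenous randomization). By the i.i.d.\ assumption, $m_{N-1}$ is independent of $m_{[0,N-2]}$ and of the (possibly randomized) earlier transmissions $x_{[0,N-2]}$; therefore the conditional law of $m_{[0,N-2]}$ given $\hat{x}_{[0,N-2]}$ remains independent of $m_{N-1}$, so that $m_{[0,N-2]}$ plays the role of \emph{private randomization} that is unobserved by the decoder.

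With this identification, the stage-$N-1$ subgame conditioned on $\hat{x}_{[0,N-2]}$ is precisely a single-stage cheap talk game: the encoder observes $m_{N-1}\sim P(\mathrm{d}m_{N-1})$ and outputs a (possibly randomized) message $x_{N-1}$, and the decoder chooses $u_{N-1}$ to minimize the expected quadratic decoder cost. Applying Theorem~\ref{thm:randCheapTalk} yields that any equilibrium stage-$N-1$ encoder kernel is, almost surely, equivalent to a quantized encoder with convex bins. Since $\hat{x}_{[0,N-2]}$ was arbitrary and the conclusion is uniform in it, the final-stage encoder in the multi-stage game must be quantized.

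The main subtlety to handle carefully is that $m_{[0,N-2]}$ and $x_{[0,N-2]}$ are generally correlated under the fixed earlier policies, so one cannot directly claim that $m_{[0,N-2]}$ is independent of the decoder's information; what saves us is precisely the i.i.d.\ hypothesis, which guarantees that after conditioning on the decoder-observed $x_{[0,N-2]}$ the remaining information in $m_{[0,N-2]}$ is still independent of the current source $m_{N-1}$. This is the point where the strengthened randomized-encoder version of the single-stage result (Theorem~\ref{thm:randCheapTalk}) is essential: the deterministic-encoder version of the cheap talk quantization result would not suffice, because the induced stage-$N-1$ kernel need not be deterministic in $m_{N-1}$.
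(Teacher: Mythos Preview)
Your proposal is correct and follows essentially the same approach as the paper: condition on the common information $x_{[0,N-2]}$, use the i.i.d.\ hypothesis to argue that $P(\mathrm{d}m_{N-1}\mid x_{[0,N-2]})=P(\mathrm{d}m_{N-1})$ and that the residual information $m_{[0,N-2]}$ (conditioned on $x_{[0,N-2]}$) is independent of $m_{N-1}$, and then invoke Theorem~\ref{thm:randCheapTalk} by treating $m_{[0,N-2]}$ as private randomization. The paper presents the $2$-stage case explicitly (including an intermediate step where the encoder is first assumed not to use $m_0$) before noting the $N$-stage extension, whereas you go directly to the general case and straight to the randomized-kernel argument, but the substance is identical.
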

\begin{proof}
Here, we prove the result for the $2$-stage setup, the extension to multiple stages is merely technical, as we comment on at the end of the proof. Let $c_1^e(m_1,u_1)$ be the second stage cost function of the encoder. Then the expected cost of the second stage encoder $J_1^e$ is
\begin{align}
J_1^e&=\int P(\mathrm{d}m_0,\mathrm{d}m_1,\mathrm{d}x_0,\mathrm{d}x_1)\,c_1^e(m_1,u_1) \nn\\
\begin{split}
&= \int P(\mathrm{d}x_0)\, \int P(\mathrm{d}m_1|x_0)\,P(\mathrm{d}m_0|m_1,x_0)\\ 
&\qquad\qquad\times c_1^e(m_1,\gamma_1^d(x_0,\gamma_1^e(m_0,m_1,x_0))) \;. 
\label{eq:x0paramertrize}
\end{split} 
\end{align} 
Since $P(\mathrm{d}m_1|x_0)=P(\mathrm{d}m_1)$ and $P(\mathrm{d}m_0|m_1,x_0)=P(\mathrm{d}m_0|x_0)$ for an i.i.d. source, the inner integral of \eqref{eq:x0paramertrize} can be considered as an expression for a given $x_0$. Thus, given the second stage encoder and decoder policies $\gamma_1^e(m_0,m_1,x_0)$ and $\gamma_1^d(x_0,x_1)$, it is possible to define policies which are parametrized by the common information $x_0$ almost surely so that $\widehat{\gamma}_{x_0}^e(m_0,m_1) \triangleq \gamma_1^e(m_0,m_1,x_0)$ and $\widehat{\gamma}_{x_0}^d(x_1) \triangleq \gamma_1^d(x_0,x_1)$. 

Now fix the first stage policies $\gamma_0^e$ and $\gamma_0^d$. Suppose that the second stage encoder does not use $m_0$; i.e., $\widehat{\gamma}_{x_0}^{e\prime}(m_1)$ is the policy of the second stage encoder. For the policies $\widehat{\gamma}_{x_0}^{e\prime}(m_1)$ and $\widehat{\gamma}_{x_0}^d(x_1)$, by using the second stage encoder cost function $H_{x_0}(m_1,u_1)\triangleq \mathbb{E}[(m_1-u_1-b)^2|x_0]$ and the bin arguments from Theorem~\ref{noiselessDiscrete}, it can be deduced that, due to the continuity of $H_{x_0}(m_1,u_1)$ in $m_1$, the equilibrium policies for the second stage must be quantized for any collection of policies $(\gamma_0^e, \gamma_0^d)$ and for any given $x_0$. Now let the second stage encoder use $m_0$; i.e., $\widehat{\gamma}_{x_0}^e(m_0,m_1)$ is the deterministic policy of the second stage encoder, which can be regarded as an equivalent randomized encoder policy (as a stochastic kernel from $\mathbb{M}_1$ to $\mathbb{X}_1$) where $m_0$ is a real valued random variable independent of the source, $m_1$. From Theorem~\ref{thm:randCheapTalk}, the equilibrium is achievable with an encoder policy which uses only $m_1$; i.e., $\widehat{\gamma}_{x_0}^{e*}(m_1)$ is an encoder policy at the equilibrium and thus the equilibria are quantized.$\newline$
For the $N$-stage game, the common information of the final stage encoder and decoder becomes $x_{[0,N-2]}$, and $m_{[0,N-2]}$ is a vector valued random variable independent of the final stage source $m_{N-1}$.
\end{proof}

\begin{assum}\label{assumption:finiteEq}
	The source $m_k$ is so that the single-stage cheap-talk game satisfies the following:\newline
	(i) There exists a finite upper bound on the number of quantization bins that any equilibrium admits. \newline
	(ii) There exist finitely many equilibria corresponding to a given number of quantization bins.
\end{assum}

A number of comments on  Assumption \ref{assumption:finiteEq} is in order: This assumption is not unrealistic, e.g., all sources with bounded support, and sources with sufficiently light tail such as the exponential distribution satisfies this property, provided that their associated densities are one-sided and the sign of $b$ is negative \cite{isitNumberBinsArxiv}. A sufficient condition for Assumption \ref{assumption:finiteEq} is that the source admits a bounded support (which would require by \cite[Lemma 1]{SignalingGames} that there exists an upper bound on the number of bins in any equilibrium), and that a monotonicity condition \cite[conditions (M), or equivalently (M$^\prime$)]{SignalingGames} holds, which characterize the behavior of equilibrium policies. Note though that this condition is much more than what is needed in Assumption \ref{assumption:finiteEq}, since it entails the uniqueness of equilibria for a given number of bins: The uniqueness of equilibria even for team problems with $b=0$ requires restrictive $\log$-concavity conditions \cite[p. 1475]{Kieffer93}, \cite{Fleischer64}.

\begin{thm}	\label{thm:encoderAllQuantized}
	Under Assumption~\ref{assumption:finiteEq}, all stages must have quantized equilibria with finitely many bins in the $N$-stage repeated cheap talk game.
\end{thm}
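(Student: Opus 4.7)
The plan is to proceed by backward induction on the stage index $k$, from $k=N-1$ down to $k=0$, with the inductive claim that the subgame starting at stage $k$ admits only finitely many perfect Bayesian equilibria, each of which is quantized with finitely many bins at every stage from $k$ through $N-1$. The base case $k=N-1$ is immediate: by the i.i.d.\ property $P(\mathrm{d}m_{N-1}\mid x_{[0,N-2]})=P(\mathrm{d}m_{N-1})$, the conditional last-stage game is a genuine single-stage cheap-talk problem on the source $m_{N-1}$, so Theorem~\ref{noiselessDiscrete} yields quantization and Assumption~\ref{assumption:finiteEq} yields the two desired finiteness statements.

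For the inductive step at stage $k$, the i.i.d.\ assumption guarantees that the continuation subgame starting at stage $k+1$ is, as a stochastic game on $m_{[k+1,N-1]}$, the same game for every realization of $x_{[0,k]}$; the inductive hypothesis therefore supplies a finite set $\{V^{(1)},\ldots,V^{(L)}\}$ of possible expected continuation costs. Conditioning on $x_{[0,k-1]}$ and invoking Theorem~\ref{thm:randCheapTalk} to absorb the independent randomness $m_{[0,k-1]}$, the stage-$k$ encoder's problem reduces to minimizing, over $x_k=\gamma_k^e(m_k)$, the effective cost $(m_k-u_k(x_k)-b)^2+V_{k+1}(x_{[0,k-1]},x_k)$, whose second term takes values in the finite set above. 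For any two labels $(u_1,v_1),(u_2,v_2)$ used in equilibrium, the difference of this effective cost is affine in $m_k$, so the single-crossing argument used in the proof of Theorem~\ref{noiselessDiscrete} carries over verbatim and yields a convex (interval) partition of $m_k$, giving the quantized structure at stage $k$.

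The main obstacle is establishing the finite-bin bound: a priori, the extra summand $V_{k+1}$ could allow the encoder to use infinitely many labels. Combining the indifference equation
\begin{equation*}
m^*_{i,i+1}=\tfrac{u_i+u_{i+1}}{2}+b+\tfrac{v_{i+1}-v_i}{2(u_{i+1}-u_i)}
\end{equation*}
with the centroid (Bayesian-consistency) constraint $u_i\in(m^*_{i-1,i},m^*_{i,i+1})$ yields, after elementary algebra, a strictly positive lower bound $u_{i+1}-u_i\ge d_{\min}(b,\Delta V)>0$, where $\Delta V=\max_\ell V^{(\ell)}-\min_\ell V^{(\ell)}<\infty$. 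Thus $V_{k+1}$ is a uniformly bounded perturbation of the standard quadratic encoder cost, and the source-level conditions underlying Assumption~\ref{assumption:finiteEq}(i) (bounded support, or the light-tail conditions of the type in \cite{isitNumberBinsArxiv}) continue to preclude accumulation of the $u_i$'s, giving a finite upper bound on the number of bins at stage $k$. Assumption~\ref{assumption:finiteEq}(ii) applied to the resulting perturbed problem, together with the finite number of continuation equilibria, then yields only finitely many PBE from stage $k$ onwards, closing the induction.
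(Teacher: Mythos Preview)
Your backward-induction skeleton and the reduction of the stage-$k$ encoder's problem to minimizing $(m_k-u-b)^2+V_{k+1}(x)$ match the paper's approach, and your single-crossing/interval argument is fine (though the gap bound should depend on the full finite set of continuation values, not only on $\Delta V$). The genuine problem is in how you invoke Assumption~\ref{assumption:finiteEq}. Both parts of that assumption are stated for the \emph{unperturbed} single-stage cheap-talk game. You deal with part~(i) by appealing instead to ``the source-level conditions underlying'' it (bounded support, light tails), which silently strengthens the theorem's hypotheses; and you invoke part~(ii) ``applied to the resulting perturbed problem'', which is simply a different game. Nothing in Assumption~\ref{assumption:finiteEq} guarantees that the stage-$k$ game with the additive continuation term $V_{k+1}$ has finitely many equilibria for a given bin count, so your inductive step does not close as written.

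The paper sidesteps this via an equivalence-class decomposition that you are missing. Group the first-stage messages $x_0$ according to the value of the continuation cost $G(x_0)$; by the inductive hypothesis there are only finitely many classes. The key point is that for any two decoder actions $u^\alpha,u^\beta$ belonging to the \emph{same} class the additive terms $G(x_0^\alpha)=G(x_0^\beta)$ cancel, so the usual Crawford--Sobel comparison goes through unchanged and yields the standard separation $u^\beta-u^\alpha>2|b|$. This places each equivalence class back in the regime of the ordinary single-stage game, so Assumption~\ref{assumption:finiteEq}(i) can be invoked for that class without modification; the finite number of classes times the finite size of each class gives the bin bound, and Assumption~\ref{assumption:finiteEq}(ii) then controls the number of stage-$k$ equilibria. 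Your uniform-gap argument mixes actions across classes and therefore never reduces to the unperturbed situation, which is precisely why you are forced to stretch Assumption~\ref{assumption:finiteEq} beyond its stated scope.
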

\begin{proof}
Consider first the $2$-stage setup; i.e., given that the second stage has a quantized equilibrium by Theorem~\ref{thm:encoderNQuantized}, the quantized nature of the first stage will be established. Let $F(m_0,x_0)$ be a cost function for the first stage encoder if it encodes message $m_0$ as $x_0$. Since the second stage equilibrium cost does not depend on $m_0$ (since $m_0$ is a random variable independent of the source $m_1$ as shown in Theorem~\ref{thm:encoderNQuantized}), $F(m_0,x_0)$ can be written as $F(m_0,x_0) = \left(m_0-\gamma_0^d(x_0)-b\right)^2 + G(x_0)$ where $G(x_0)\triangleq \mathbb{E}_{m_1}\left[\left(m_1-\gamma_1^{*,d}(x_0,\gamma_1^{*,e}(m_1,x_0))-b\right)^2\Big| x_0\right]$ is the expected cost of the second stage encoder, and $\gamma_1^{*,e}$ and $\gamma_1^{*,d}$ are the second stage encoder and decoder policies at the equilibrium, respectively. Note that the second stage encoder cost can $G(x_0)$ take finitely many different values by Assumption~\ref{assumption:finiteEq}-(ii). Now define the equivalence classes $T_{x_p}$ for every $x_p\in\mathbb{X}$ as $T_{x_p}=\{x\in\mathbb{X} : G(x)=G(x_p)\}$; i.e., the equivalence classes $T_{x_p}$ keep the first stage encoder actions that result in the same second stage cost in the same set. Note that there are finitely many equivalence classes $T_{x_p}$ since $G(x_0)$ can take finitely many different values. 


If the number of bins of the first stage equilibrium is less than or equal to the number of the equivalence classes $T_{x_p}$, then the proof is complete; i.e., the first stage equilibrium is already quantized with finitely may bins. Otherwise,
one of the equivalence classes $T_{x_p}$ has at least two elements; say $x_0^{\alpha}$ and $x_0^{\beta}$, which implies $G(x_0^{\alpha})=G(x_0^{\beta})$. Let corresponding bins of the actions $x_0^{\alpha}$ and $x_0^{\beta}$ be $\mathcal{B}_0^{\alpha}$ and $\mathcal{B}_0^{\beta}$, respectively. Also let $m_0^{\alpha}$ and $m_0^{\beta}$ represent any point in $\mathcal{B}_0^{\alpha}$ and $\mathcal{B}_0^{\beta}$, respectively; i.e., $m_0^{\alpha} \in \mathcal{B}_0^{\alpha}$ and $m_0^{\beta} \in \mathcal{B}_0^{\beta}$. The decoder chooses an action $u_0^{\alpha}=\gamma_0^d(x_0^{\alpha})$ when the encoder sends $x_0^{\alpha}=\gamma_0^e(m_0^{\alpha})$, and an action $u_0^{\beta}=\gamma_0^d(x_0^{\beta})$ when the encoder sends $x_0^{\beta}=\gamma_0^e(m_0^{\beta})$ in order to minimize his total cost; further, we can assume that $u_0^{\alpha}<u_0^{\beta}$ without loss of generality. Due to the equilibrium definitions from the view of the encoder, $F(m_0^{\alpha},x_0^{\alpha})<F(m_0^{\alpha},x_0^{\beta})$ and $F(m_0^{\beta},x_0^{\beta})<F(m_0^{\beta},x_0^{\alpha})$. These inequalities imply that 
\begin{align*}
\begin{split}
&(m_0^{\alpha}-u_0^{\alpha}-b)^2+G(x_0^{\alpha})<(m_0^{\alpha}-u_0^{\beta}-b)^2+G(x_0^{\beta}) \\
&\Rightarrow (u_0^{\alpha}-u_0^{\beta})(u_0^{\alpha}+u_0^{\beta}-2(m_0^{\alpha}-b)) < 0 \;,\\ &(m_0^{\beta}-u_0^{\beta}-b)^2+G(x_0^{\beta})<(m_0^{\beta}-u_0^{\alpha}-b)^2+G(x_0^{\alpha})\\
&\Rightarrow (u_0^{\beta}-u_0^{\alpha})(u_0^{\alpha}+u_0^{\beta}-2(m_0^{\beta}-b)) < 0 \;.\\
\end{split}
\end{align*} 
Thus, we have $u_0^{\alpha}+u_0^{\beta}-2(m_0^{\alpha}-b)>0$ and $u_0^{\alpha}+u_0^{\beta}-2(m_0^{\beta}-b)<0$, that reduce to \mbox{$m_0^{\alpha}<{u_0^{\alpha}+u_0^{\beta}\over2}+b<m_0^{\beta}$}.
Since $u^{\alpha}=\mathbb{E}[m|m \in \mathcal{B}^{\alpha}]$ and $u^{\beta}=\mathbb{E}[m|m \in \mathcal{B}^{\beta}]$ at the equilibrium, \mbox{$u_0^{\alpha}<{u_0^{\alpha}+u_0^{\beta}\over2}+b<u_0^{\beta} \Rightarrow u_0^{\beta}-u_0^{\alpha}>2|b|$} is obtained. Hence, there must be at least $2|b|$ distance between the actions of the first stage decoder which are in the same equivalence class. Therefore, the cardinality of any equivalence class $T_{x_p}$ is finite due to Assumption~\ref{assumption:finiteEq}-(i). Further, there are finitely many equivalence classes $T_{x_p}$ as shown above. These two results imply that the first stage equilibrium must be quantized with finitely many bins. Thus, due to Assumption~\ref{assumption:finiteEq}-(ii), there are finitely many equilibria in the first stage; i.e., the first stage encoder cost can take finitely many values.

For the $N$-stage game, we apply the similar recursion from the final stage to the first stage. It is already proven that the last two stage encoder cost can take finitely many values; thus, the same methods can be applied to show the quantized structure (with finitely many bins) of the equilibria for all stages. 
\end{proof}

\begin{rem}
	It is important to note that the first stage encoder minimizes his expected cost $J_0^e=\mathbb{E}[F(m_0,x_0)]$ by minimizing his cost $F(m_0,x_0)$ for every realizable $m_0$; this property will be later used as well.
\end{rem}

\subsection{Multi-Stage Game with a Markov Source: Nash Equilibria}

Here, the source $M_k$ is assumed to be real valued Markovian for $k\leq N-1$. The following result generalizes Theorem~\ref{thm:encoderNQuantized}, which only considered i.i.d. sources.
\begin{thm}\label{thm:encoderNMarkovReductionQuantized}
	In the $N$-stage cheap talk game with a Markov source, the equilibrium policies for the final stage encoder must be quantized for any collection of policies $\left(\gamma^e_{[0,N-2]}, \gamma^d_{[0,N-2]}\right)$  and for any real-valued source model with arbitrary probability measure. 	
\end{thm}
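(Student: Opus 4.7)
The plan is to adapt the proof of Theorem~\ref{thm:encoderNQuantized} to the Markov source setting. The overall structure --- parametrize the final-stage policies by the common information $x_{[0,N-2]}$, reduce to a single-stage cheap talk conditional on that common information, and invoke Theorem~\ref{thm:randCheapTalk} --- carries over; the nontrivial point is to show that the reduction still goes through when the earlier source samples are no longer independent of the current one.

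First, I would parametrize the last-stage policies as $\widehat{\gamma}^e_{x_{[0,N-2]}}(m_{[0,N-1]}) := \gamma_{N-1}^e(m_{[0,N-1]}, x_{[0,N-2]})$ and $\widehat{\gamma}^d_{x_{[0,N-2]}}(x_{N-1}) := \gamma_{N-1}^d(x_{[0,N-2]}, x_{N-1})$, and write the last-stage cost as an outer integral over $P(\mathrm{d}x_{[0,N-2]})$ and an inner expectation with respect to $P(\mathrm{d}m_{[0,N-1]} \mid x_{[0,N-2]})$, exactly as in \eqref{eq:x0paramertrize}. For each realization of $x_{[0,N-2]}$ the inner expression is a single-stage cheap talk game in which the effective source is $m_{N-1}$, distributed according to the conditional law $\nu_{x_{[0,N-2]}}(\mathrm{d}m_{N-1}) := P(\mathrm{d}m_{N-1} \mid x_{[0,N-2]})$ --- itself an arbitrary real-valued probability measure --- and the cost is $\mathbb{E}[(m_{N-1} - u_{N-1} - b)^2 \mid x_{[0,N-2]}]$, continuous in $m_{N-1}$.

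Second, I would handle the side information $m_{[0,N-2]}$ that the encoder has beyond $m_{N-1}$. In the i.i.d.\ case one uses independence of $m_{[0,N-2]}$ from $m_{N-1}$ so that this side information acts as private randomization and Theorem~\ref{thm:randCheapTalk} applies verbatim. In the Markov case independence fails, but this is exactly where the generality of the stochastic-kernel formulation of Theorem~\ref{thm:randCheapTalk} is essential. Given any deterministic equilibrium policy $\widehat{\gamma}^e_{x_{[0,N-2]}}(m_{[0,N-2]}, m_{N-1})$, define the induced stochastic kernel
\begin{equation*}
Q_{x_{[0,N-2]}}(A \mid m_{N-1}) := \Pr\!\bigl(\widehat{\gamma}^e_{x_{[0,N-2]}}(M_{[0,N-2]}, m_{N-1}) \in A \,\bigm|\, m_{N-1}, x_{[0,N-2]}\bigr),
\end{equation*}
where $M_{[0,N-2]}$ is drawn from its conditional law $P(\mathrm{d}m_{[0,N-2]} \mid m_{N-1}, x_{[0,N-2]})$. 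One checks that $Q_{x_{[0,N-2]}}$ is Borel measurable in $m_{N-1}$ and, by construction, produces the same joint law of $(m_{N-1}, x_{N-1})$ as the original deterministic policy. Therefore the encoder's expected cost, the decoder's optimal response $u_{N-1} = \mathbb{E}[m_{N-1} \mid x_{[0,N-1]}]$, and the equilibrium conditions all coincide between the original game and a reduced single-stage cheap talk game with randomized encoder $Q_{x_{[0,N-2]}}$ acting on source law $\nu_{x_{[0,N-2]}}$.

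Theorem~\ref{thm:randCheapTalk} then applies directly to this reduced problem for $P(\mathrm{d}x_{[0,N-2]})$-almost every $x_{[0,N-2]}$, giving a quantized equilibrium with convex bins and adjacent decoder actions separated by at least $2|b|$. Integrating back over $x_{[0,N-2]}$ shows that the last-stage equilibrium is quantized almost surely, as claimed. The extension from $2$-stage to $N$-stage is the same book-keeping as in Theorem~\ref{thm:encoderNQuantized}: the common information is enlarged to $x_{[0,N-2]}$ and the encoder's extra data $m_{[0,N-2]}$ becomes a vector-valued variable, which is absorbed into the induced kernel in exactly the same way. The main obstacle is conceptual rather than technical --- one must recognize that Theorem~\ref{thm:randCheapTalk} was phrased for an arbitrary stochastic kernel, not merely for encoders randomizing with an independent coin, so that Markov correlations between $m_{[0,N-2]}$ and $m_{N-1}$ pose no difficulty once the induced-kernel representation above is set up.
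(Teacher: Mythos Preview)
Your proposal is correct and follows essentially the same strategy as the paper: parametrize the final-stage policies by the common information, reduce to a single-stage cheap talk problem with a randomized encoder, and invoke Theorem~\ref{thm:randCheapTalk}. The one noteworthy technical difference is in how the correlated side information $m_{[0,N-2]}$ is absorbed. You build the induced stochastic kernel $Q_{x_{[0,N-2]}}(\cdot\mid m_{N-1})$ by integrating out $m_{[0,N-2]}$ against its conditional law, and then appeal to Theorem~\ref{thm:randCheapTalk} in its full stochastic-kernel generality. The paper instead invokes the noise-outsourcing lemma \cite[Lemma~1.2]{gihman2012controlled}, \cite[Lemma~3.1]{BorkarRealization} to write $m_0 = g(m_1,r)$ for some measurable $g$ and an independent $[0,1]$-valued $r$, so that the encoder policy becomes $\widetilde{\gamma}^e_{x_0}(m_1,r)$ with $r$ genuinely independent of $m_1$; this reduces the Markov case literally to the i.i.d.\ setup of Theorem~\ref{thm:encoderNQuantized}. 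Your route is slightly more direct and avoids citing the representation lemma, at the cost of having to verify (as you do) that the encoder's pointwise best-response condition survives the passage to the induced kernel; the paper's route buys a one-line reduction to the already-proved i.i.d.\ result.
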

\begin{proof}
Here, we prove the results for the $2$-stage games as the extension is merely technical. Similar to that in Theorem~\ref{thm:encoderNQuantized}, the expected cost of the second stage encoder can be written as \eqref{eq:x0paramertrize}. After following similar arguments, the second stage encoder policy becomes $\widehat{\gamma}_{x_0}^e(m_0,m_1)\overset{(a)}{=}\widehat{\gamma}_{x_0}^e(g(m_1,r),m_1)=\widetilde{\gamma}_{x_0}^e(m_1,r)$ where (a) holds since any stochastic kernel from a complete, separable and metric space to another one, $P(\mathrm{d}m_0|m_1)$, can be realized by some measurable function $m_0=g(m_1,r)$ where $r$ is a $[0,1]$-valued independent random variable (see \cite[Lemma 1.2]{gihman2012controlled}, or in \cite[Lemma 3.1]{BorkarRealization}). Hence, the equilibria are quantized by Theorem~\ref{thm:encoderNQuantized}.
\end{proof}

As it can be observed from Theorem~\ref{thm:encoderAllQuantized}, to be able
to claim that the equilibria for all stages are quantized, we require very strong conditions. In fact, in the absence of such conditions, the equilibria for a Markov source can be quite counterintuitive and even fully revealing as we observe in the following theorem due to \cite{dynamicStrategicInfoTrans}.
\begin{thm}\cite{dynamicStrategicInfoTrans}\label{rem:golosovExplanation} 
	For a Markov source, there exist multi-stage cheap talk games with fully revealing equilibria.
\end{thm}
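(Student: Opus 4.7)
The plan is to establish the theorem by explicit construction, as done in \cite{dynamicStrategicInfoTrans}. First I would choose a Markov source on a bounded support whose transition kernel is sufficiently persistent, so that the realization $m_{k+1}$ carries statistical information about $m_k$; this persistence is what creates an indirect verification channel for past reports that is absent in the i.i.d.\ setting analyzed in Theorems~\ref{thm:encoderNQuantized} and~\ref{thm:encoderAllQuantized}, and is the structural ingredient that breaks the quantization conclusion.

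Next I would specify the candidate fully revealing strategies. On the equilibrium path the encoder sets $x_k = m_k$ at each stage and the decoder responds with $u_k = m_k$. Off the equilibrium path, I would designate a measurable \emph{detection event} defined from the joint history of reports and subsequently observed realizations, and specify that upon its occurrence both players revert to the non-informative (babbling) continuation in every remaining stage. Since the babbling equilibrium is known to exist in every single-stage cheap talk subgame (as recalled before Theorem~\ref{noiselessDiscrete}), this punishment continuation is itself a subgame equilibrium, so the resulting profile respects the perfect Bayesian equilibrium refinement used in this paper.

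The verification would go through a one-shot deviation argument. Along the path, the encoder incurs a per-stage bias cost of $b^2$ and enjoys truthful continuation. A one-stage deviation can reduce the current-stage loss by at most an $O(b^2)$ amount (the encoder would prefer the decoder's action shifted by $b$), but triggers the detection event with strictly positive probability and hence replaces the continuation by babbling, whose expected cost is proportional to the residual variance of $m_{k+1},m_{k+2},\ldots$ under the prior. Choosing the kernel so that the residual-variance term dominates the one-shot bias gain simultaneously closes every incentive constraint, and perfect Bayesian updating then certifies full revelation as an equilibrium.

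The hard part, and the step I expect to be the main obstacle, is the joint design of the transition kernel and the detection event so that three competing requirements hold at once: (i) the event must have probability zero on the truthful path, so that the threat is never triggered on-path; (ii) it must have positive probability under every profitable one-stage deviation, so that the punishment bites uniformly in $m_k$; and (iii) the posterior beliefs after the event must leave babbling as a credible subgame equilibrium. Balancing these three is the delicate technical content of \cite{dynamicStrategicInfoTrans}, and my plan would be to invoke that construction directly rather than redevelop it, after checking that the perfect Bayesian framework used there is compatible with the equilibrium notion adopted in the present paper.
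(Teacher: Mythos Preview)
Your proposal misidentifies both the source model and the incentive mechanism underlying the result in \cite{dynamicStrategicInfoTrans}. The construction there uses a \emph{constant} source (the same realization $m_0$ transmitted at every stage, i.e.\ the degenerate Markov kernel), not a genuinely stochastic chain, and the bias is taken sufficiently small. More importantly, the equilibrium is not sustained by a trigger-punishment mechanism at all: it is built from a sequence of \emph{quantized} single-stage equilibria, each of which is a bona fide Nash equilibrium of the one-shot game with the posterior induced by prior messages. Early stages partition the type space coarsely; later stages refine within each cell; because each stage is itself a single-stage equilibrium, no intertemporal threat is needed, and for small enough $b$ the composition of finitely many such partitions separates every type.

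The trigger-strategy route you sketch cannot close the incentive constraints in this model. The decoder's information set is $\mathcal{I}_k^d=\{x_{[0,k]}\}$: he observes only messages, never any state realization, so there is no ``indirect verification channel'' through which $m_{k+1}$ can be checked against a past report. Any detection event must be measurable with respect to the message history alone, and against such a rule the encoder always has an undetectable profitable deviation: report $x_k=m_k-b$ at every stage (or, with a non-degenerate kernel, fabricate an entire path consistent with the shifted initial report). This yields the encoder's bliss action at each stage and is indistinguishable from truthful play for any kernel with translation structure, and for general kernels small shifts remain profitable at first order while detection probability is at most second order. Your requirement (ii) therefore fails generically, and invoking \cite{dynamicStrategicInfoTrans} to supply the missing construction does not help, since that paper does not contain a trigger-strategy argument.
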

An example is presented in Golosov et. al. \cite{dynamicStrategicInfoTrans}, where an individual source is transmitted repeatedly (thus the Markov source is a constant source) for a sufficiently small bias value. For such a source, the terminal stage conditional measure can be made atomic via a careful construction of equilibrium policies for earlier time stages; i.e., the defined separable groups/types and discrete/quantized stage-wise equilibria through multiple stages can lead to a fully informative equilibrium for the complete game. Next, Nash equilibria of the multi-stage multi-dimensional cheap talk are analyzed. Since there may be discrete, non-discrete or even linear Nash equilibria in the single-stage multi-dimensional cheap talk by \cite[Theorem 3.4]{tacWorkArxiv}, the equilibrium policies are more difficult to characterize; however, we state the following:
\begin{thm}
	\begin{itemize}
		\item[(i)]\label{thm:multiCheap} The Nash equilibrium cannot be fully revealing in the static (single-stage) multi-dimensional cheap talk when the source has positive measure for every non-empty open set.
		\item[(ii)]\label{thm:dynamicMultiCheap}The final stage Nash equilibria cannot be fully revealing in the multi-stage multi-dimensional cheap talk for i.i.d. and Markov sources when the conditional distribution $P(\mathrm{d}\mathbf{m}_{N-1}|\mathbf{m}_{N-2})$ has positive measure for every non-empty open set.
	\end{itemize}	
\end{thm}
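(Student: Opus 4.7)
My plan is to prove both parts by a single unilateral--deviation argument, establishing (i) first and then reducing (ii) to (i). Assume toward contradiction that a fully revealing Nash equilibrium $(\gamma^{*,e},\gamma^{*,d})$ exists for the static multi-dimensional cheap talk. Full revelation together with the decoder's Bayes optimality under quadratic loss forces $\gamma^{*,d}(\gamma^{*,e}(\vmm))=\vmm$ for $P_{\vmm}$-a.e.\ $\vmm$ in the support of the source, so the encoder's on-path cost at realization $\vmm$ equals exactly $\|\vb\|^2$.

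The crux is to let the encoder at type $\vmm$ mimic another type $\vmm'$ in the support: sending the on-equilibrium signal $\gamma^{*,e}(\vmm')$ elicits the decoder reply $u=\vmm'$, since this signal is on the equilibrium path and Bayes' rule pins down the posterior (so off-equilibrium beliefs never enter). Nash optimality of $\gamma^{*,e}$ at $\vmm$ therefore yields
\begin{equation*}
\|\vmm-\vmm'-\vb\|^2 \;\geq\; \|\vb\|^2, \qquad \text{i.e.,}\qquad (\vmm-\vmm')^{\top}\bigl(\vmm-\vmm'-2\vb\bigr)\;\geq\; 0,
\end{equation*}
for a.e.\ pair $(\vmm,\vmm')$ in the support. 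Because the source charges every nonempty open set, for any support point $\vmm$ the perturbation $\vmm'=\vmm-\epsilon\vb$ (with $\vb\neq 0$ and $\epsilon>0$ small) still lies in the support, but the inequality above specializes to $\epsilon(\epsilon-2)\|\vb\|^2\geq 0$, which fails for every $\epsilon\in(0,2)$. This is the desired contradiction and settles (i). The same computation covers randomized encoders, since a deviation drawing $x'$ from $\gamma^{*,e}(\cdot\mid\vmm')$ still induces the same expected reply $\vmm'$.

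For (ii), I would reduce the final stage to the static setting of (i) by conditioning on the realized past $x_{[0,N-2]}$. Earlier-stage costs are sunk from the standpoint of a final-stage unilateral deviation, so any Nash equilibrium of the multi-stage game induces one in the conditional single-stage game at time $N-1$. The conditional law of $\vmm_{N-1}$ given $x_{[0,N-2]}$ is a mixture over $\vmm_{N-2}\mid x_{[0,N-2]}$ of the kernels $P(\mathrm{d}\vmm_{N-1}\mid\vmm_{N-2})$ (and is simply $P(\mathrm{d}\vmm_{N-1})$ in the i.i.d.\ case); under the hypothesis that these kernels charge every nonempty open set, so does the mixture. For a Markov source the final-stage encoder may depend on $m_{[0,N-2]}$, which is correlated with $\vmm_{N-1}$; exactly as in the proof of Theorem~\ref{thm:encoderNMarkovReductionQuantized}, I would invoke \cite[Lemma 1.2]{gihman2012controlled} to realize $m_{[0,N-2]}=g(\vmm_{N-1},r)$ with $r$ a $[0,1]$-valued random variable independent of $\vmm_{N-1}$, thereby recasting the encoder as a stochastic kernel from $\vmm_{N-1}$ to $x_{N-1}$. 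The randomized-encoder version of (i) then applies verbatim. The main obstacle is precisely this correlation in the Markov case; once the realization reduction is in place, the deviation step in (i) transfers without change.
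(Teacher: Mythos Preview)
Your proof is correct and follows essentially the same approach as the paper. For (i), both arguments exploit the encoder's no-profitable-deviation condition to rule out arbitrarily close distinct decoder actions when $\vb\neq\mathbf{0}$---your specific perturbation $\vmm'=\vmm-\epsilon\vb$ is precisely the choice that witnesses the failure of the paper's hyperplane inequality $\|\vb_{\mathbf{d}}\|\leq\|\mathbf{d}\|/2$; for (ii), both reduce the final stage to the static case by conditioning on $x_{[0,N-2]}$ and, in the Markov case, invoking the realization lemma exactly as in Theorem~\ref{thm:encoderNMarkovReductionQuantized}.
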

\begin{proof}
(i) Similar to the single-stage scalar case Theorem~\ref{noiselessDiscrete}, in an equilibrium, define two cells $C^{\alpha}$ and $C^{\beta}$, any points in those cells as $\mathbf{m}^{\alpha} \in C^{\alpha}$ and $\mathbf{m}^{\beta} \in C^{\beta}$, and the actions of the decoder as $\mathbf{u}^{\alpha}$ and $\mathbf{u}^{\beta}$ when the encoder transmits $\mathbf{m}^{\alpha}$ and $\mathbf{m}^{\beta}$, respectively. Let $F(\mathbf{m},\mathbf{u})\triangleq\|\mathbf{m}-\mathbf{u}-\vb\|^2$. Due to the equilibrium definitions from the view of the encoder; $F(\mathbf{m}^{\alpha},\mathbf{u}^{\alpha})<F(\mathbf{m}^{\alpha},\mathbf{u}^{\beta})$ and $F(\mathbf{m}^{\beta},\mathbf{u}^{\beta})<F(\mathbf{m}^{\beta},\mathbf{u}^{\alpha})$. Hence, there exists a hyperplane defined by  $F(\mathbf{z},\mathbf{u}^{\alpha})=F(\mathbf{z},\mathbf{u}^{\beta}) \Rightarrow \|(\mathbf{z}-\vb)-\mathbf{u}^{\alpha}\|^2 = \|(\mathbf{z}-\vb)-\mathbf{u}^{\beta}\|^2$. 
The hyperplane defined by the points $\mathbf{z}$ divides the space into two subspaces: let $Z^{\alpha}$ that contains $\mathbf{u}^{\alpha}$ and $Z^{\beta}$ that contains $\mathbf{u}^{\beta}$ be those subspaces. $\mathcal{C}^{\beta}$ and $Z^{\alpha}$ are disjoint subspaces since $F(\mathbf{z}+\delta (\mathbf{u}^{\beta}-\mathbf{u}^{\alpha}),\mathbf{u}^{\alpha})\geq F(\mathbf{z}+\delta (\mathbf{u}^{\beta}-\mathbf{u}^{\alpha}),\mathbf{u}^{\beta})$ for any $\delta>0$. Similarly, $\mathcal{C}^{\alpha}$ and $Z^{\beta}$ are disjoint subspaces, too. Thus, the hyperplane defined by the points $\mathbf{z}$ must lie between $\mathbf{u}^{\alpha}$ and $\mathbf{u}^{\beta}$ which implies that the length of $\vb$ along the $\mathbf{d}\triangleq\mathbf{u}^{\beta} - \mathbf{u}^{\alpha}$ direction should not exceed half of the distance between $\mathbf{u}^{\alpha}$ and $\mathbf{u}^{\beta}$; i.e., $\|\vb_{\mathbf{d}}\|\leq\|\mathbf{d}\|/2$, where $\vb_{\mathbf{d}}$ is the projection of $\vb$ along the direction of $\mathbf{d}$. Since $\mathbf{d}$ can be any vector at a fully revealing equilibrium by the assumption on the source (i.e., the source has positive measure for every non-empty open set), $\|\vb_{\mathbf{d}}\|\leq\|\mathbf{d}\|/2$ cannot be satisfied unless $\vb=\mathbf{0}$. Thus, there cannot be a fully revealing equilibrium in the static multi-dimensional cheap talk.\newline
(ii) The proof is the multi-dimensional extension of Theorem~\ref{thm:encoderNQuantized} for i.i.d. sources, and Theorem~\ref{thm:encoderNMarkovReductionQuantized} for Markov sources. \end{proof}

\subsection{Multi-Stage Cheap Talk under Stackelberg Equilibria}
Here, the cheap talk game is analyzed under the Stackelberg formulation for both scalar and multi-dimensional sources. In this case, admittedly the problem is less interesting.
\begin{thm}\label{thm:multiStack}
	An equilibrium has to be fully revealing in	the multi-stage Stackelberg cheap talk game regardless of the source model.	 
\end{thm}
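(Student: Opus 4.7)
My plan is to exploit the defining feature of the Stackelberg formulation: once the encoder commits, the decoder's best response is completely pinned down by Bayes-optimality, so I can compute the encoder's induced cost in closed form and minimize it directly.

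First, fix any committed encoder sequence $\gamma^e_{[0,N-1]}$. Because the decoder's cost $\sum_{k=0}^{N-1}(m_k-u_k)^2$ decouples across stages and $u_k$ depends only on $\mathcal{I}_k^d=x_{[0,k]}$, the (essentially unique) decoder best response at every stage is the conditional mean $\gamma^{*,d}_k(x_{[0,k]})=\mathbb{E}[m_k\mid x_{[0,k]}]$, where the joint law of $(m_k,x_{[0,k]})$ is induced by the source and the announced encoder. By orthogonality of the Bayes estimator and the tower property,
\[
\mathbb{E}\bigl[(m_k-u_k-b)^2\bigr] \;=\; \mathbb{E}\bigl[(m_k-\mathbb{E}[m_k\mid x_{[0,k]}])^2\bigr] + b^2 \;=\; \mathbb{E}\bigl[\mathrm{Var}(m_k\mid x_{[0,k]})\bigr] + b^2,
\]
since the cross term $-2b\,\mathbb{E}[m_k-\mathbb{E}[m_k\mid x_{[0,k]}]]$ vanishes. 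Summing over $k$, the encoder's induced cost equals $\sum_{k=0}^{N-1}\mathbb{E}[\mathrm{Var}(m_k\mid x_{[0,k]})] + Nb^2\;\ge\;Nb^2$.

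Next, I exhibit a strategy attaining this lower bound, thereby identifying the Stackelberg value. Take $\gamma^e_k(\mathcal{I}_k^e)=m_k$ at every stage; then $x_k$ deterministically encodes $m_k$, so $\mathrm{Var}(m_k\mid x_{[0,k]})=0$ almost surely and the induced encoder cost equals exactly $Nb^2$, matching the lower bound. The decoder's best response collapses to $u_k=m_k$, which is by definition fully revealing. For the multi-dimensional setting the same argument applies verbatim upon replacing squared error with $\|\cdot\|^2$, the conditional variance with the trace of the conditional covariance, the scalar $b$ with $\vb$, and the identity policy with any Borel injection of $\mathbb{M}$ into $\mathbb{X}$.

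The argument is essentially mechanical; the only step requiring mild care is measurability of the conditional expectation, which is automatic under the paper's standing Borel assumptions. I do not anticipate any real obstacle, since Stackelberg commitment removes game-theoretic tension: the encoder's interest in driving $\mathrm{Var}(m_k\mid\mathcal{I}_k^d)$ to zero coincides with what a cooperative transmitter would want, and the bias term only inflates the achievable cost by the unavoidable additive $Nb^2$. This is also why the Stackelberg cheap talk problem is, as the authors observe, less interesting than its Nash counterpart, where the encoder cannot directly dictate the decoder's interpretation rule.
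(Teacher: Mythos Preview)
Your proposal is correct and follows essentially the same approach as the paper: both identify the decoder's best response as the stage-wise conditional mean, then use orthogonality (the smoothing property) to show the encoder's induced cost equals $\sum_k \mathbb{E}\bigl[\mathrm{Var}(m_k\mid x_{[0,k]})\bigr] + N\|\vb\|^2$, reducing the Stackelberg problem to a team problem whose optimum is fully revealing. The only presentational difference is that the paper obtains the decoder's best response via an explicit backward recursion over stages, whereas you invoke the stage-wise decoupling directly; the substance is identical.
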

\begin{proof}
The last stage decoder cost $J_{N-1}^d(\gamma_{N-1}^e,\gamma_{N-1}^d) = \mathbb{E} [\|\mathbf{m}_{N-1}-\mathbf{u}_{N-1}\|^2 | \mathcal{I}_{N-1}^d]$ is minimized by choosing the optimal action $\mathbf{u}^{*}_{N-1}=\gamma_{N-1}^{*,d}( \mathcal{I}_{N-1}^d)=\mathbb{E}[\mathbf{m}_{N-1}|\mathcal{I}_{N-1}^d]$. For the previous stage, the decoder can minimize $J_{N-2}^d(\gamma_{N-1}^{*,e},\gamma_{N-2}^e,\gamma_{N-1}^{*,d},\gamma_{N-2}^d) = \mathbb{E} [\|\mathbf{m}_{N-2}-\mathbf{u}_{N-2}\|^2 + J_{N-1}^{*,d}(\gamma_{N-1}^{*,e},\gamma_{N-1}^{*,d}) | \mathcal{I}_{N-2}^d]$ by choosing his policy as $\mathbf{u}^{*}_{N-2}=\gamma_{N-2}^{*,d}( \mathcal{I}_{N-2}^d)=\mathbb{E}[\mathbf{m}_{N-2}|\mathcal{I}_{N-2}^d]$. Similarly, the optimal decoder actions become $\mathbf{u}^{*}_{k}=\gamma_{k}^{*,d}( \mathcal{I}_{k}^d)=\mathbb{E}[\mathbf{m}_{k}|\mathcal{I}_{k}^d]=\mathbb{E}[\mathbf{m}_{k}|\mathbf{x}_{[0,k]}]$. Then, due to the Stackelberg assumption, the total encoder cost becomes $J^e(\gamma^e_{[0,N-1]},\gamma^{*,d}_{[0,N-1]}) = \mathbb{E} \left[\sum\limits_{k=0}^{N-1} \|\mathbf{m}_k-\mathbf{u}_k\|^2\right] + N \|\mathbf{b}\|^2$ by the smoothing property of the expectation. Thus, as in the static game setup \cite[Theorem 3.3]{tacWorkArxiv}, the goals of the players become essentially the same, and the result follows. \end{proof}

\section{Multi-Stage Quadratic Gaussian Signaling Games}
\label{sec:dynamicScalarSignalingGame}

The multi-stage signaling game setup is similar to the multi-stage cheap talk setup except that there exists an additive Gaussian noise channel between the encoder and the decoder at each stage, and the encoder has a {\it soft} power constraint. For the purpose of illustration, the system model of the $2$-stage signaling game is depicted in Fig.~\ref{figure:allScheme}-(\subref{figure:genScheme}). Here, source is assumed to be an $n$-dimensional Markovian source with initial Gaussian distribution; i.e., $\vMz \sim \mathcal{N}(0,\Sigma_{\vMz})$ and $\mathbf{M}_{k+1} = G \mathbf{M}_k + \mathbf{V}_k$ where $G$ is an $n \times n$ matrix ($g$ denotes the scalar case equivalent of $G$) and $\mathbf{V}_k \sim \mathcal{N}(0,\Sigma_{\mathbf{V}_k})$ is an i.i.d. Gaussian noise sequence. The channels between the encoder and the decoder are assumed to be i.i.d. additive Gaussian channels; i.e., $\mathbf{W}_k \sim \mathcal{N}(0,\Sigma_{\mathbf{W}_k})$, and $\mathbf{W}_k$ and $\mathbf{V}_l$ are independent. 
At the $k$-th stage of the $N$-stage game, the encoder knows the values of $\mathcal{I}_k^e = \{\mathbf{m}_{[0,k]}, \mathbf{y}_{[0,k-1]}\}$ (a noiseless feedback channel is assumed) and the decoder knows the values of $\mathcal{I}_k^d = \{\mathbf{y}_{[0,k]}\}$ with $\mathbf{y}_k = \mathbf{x}_k + \mathbf{w}_k$. Thus, under the policies considered, $\mathbf{x}_k=\gamma_k^e(\mathcal{I}_k^e)$ and $\mathbf{u}_k=\gamma_k^d(\mathcal{I}_k^d)$. The encoder's goal is to minimize \eqref{eq:dynamicEncCost} with $c_k^e\left(\mathbf{m}_k,\mathbf{x}_k,\mathbf{u}_k\right) = \|\mathbf{m}_k-\mathbf{u}_k-\vb\|^2 + \lambda \|\mathbf{x}_k\|^2$, whereas, the decoder's goal is to minimize \eqref{eq:dynamicDecCost} with $c_k^d\left(\mathbf{m}_k,\mathbf{u}_k\right) = \|\mathbf{m}_k-\mathbf{u}_k\|^2$, by finding the optimal policy sequences $\gamma^{*,e}_{[0,N-1]}$ and $\gamma^{*,d}_{[0,N-1]}$, respectively, where the lengths of the vectors are defined in $L_2$ norm and $\vb$ is the bias vector.
Note that a power constraint with an associated multiplier $\lambda$ is appended to the cost function of the encoder, which corresponds to power limitation for transmitters in practice. If $\lambda=0$, this corresponds to the setup with no power constraint at the encoder. 

\subsection{Nash Equilibrium Analysis of Multi-Stage Quadratic Gaussian Signaling Games\protect\footnote{\textmd{In our related paper \protect\cite{tacWorkArxiv}, for Theorem 4.1 and Section V.B, we incidentally used the information theoretic lower bounds for the Nash equilibrium analysis. However, due to the assumption on the optimal decoder action; i.e., $\vuu=\mathbb{E}[\vmm|\vyy]$, the information theoretic arguments are actually valid for the Stackelberg case (see \protect\cite{SS_PhD} for details).}}}
\label{sec:multiMultiNash}

In multi-stage quadratic Gaussian signaling games, affine policies constitute an invariant subspace under best response maps for Nash equilibria for both scalar ($n=1$) and vector ($n>1$) Gauss-Markov sources.
\begin{thm}
	\begin{itemize}
		\item [(i)]\label{thm:affineDecoderMulti} If the encoder uses affine policies at all stages, then the decoder will be affine at all stages.
		\item [(ii)] \label{thm:multiAffineEncoder}	If the decoder uses affine policies at all stages, then the encoder will be affine at all stages.		
	\end{itemize}
\end{thm}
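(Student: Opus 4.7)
The plan is to prove the two parts separately, with (i) by a direct conditional-expectation argument and (ii) by backward induction on the stage index using a dynamic programming decomposition of the encoder's cost.

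For part (i), I would first observe that because the decoder's per-stage cost $c_k^d=\|\vmm_k-\vuu_k\|^2$ depends only on $\vmm_k$ and $\vuu_k$ and $\vuu_k$ does not enter any other stage's cost nor alter the information structure (the information sets $\mathcal{I}_j^e$ and $\mathcal{I}_j^d$ do not depend on the decoder's past actions), the decoder's best response is stage-wise: $\vuu_k^* = \mathbb{E}[\vmm_k\mid \vyy_{[0,k]}]$. Under the hypothesis that the encoder uses affine policies, one shows by induction on $k$ that $(\vmm_{[0,k]}, \vyy_{[0,k]})$ is jointly Gaussian: the base case uses $\vMz\sim\mathcal{N}(0,\Sigma_{\vMz})$ and $\vyz = A_0\vmz+\vC_0 + \vwz$; the inductive step uses the Gauss-Markov recursion $\vmm_{k+1}=G\vmm_k+\vvv_k$ together with the affine encoder $\vxx_{k+1}=A_{k+1}[\vmm_{[0,k+1]};\vyy_{[0,k]}]+\vC_{k+1}$ and independent Gaussian noise. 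For jointly Gaussian vectors the conditional expectation $\mathbb{E}[\vmm_k\mid \vyy_{[0,k]}]$ is affine in $\vyy_{[0,k]}$, which concludes the argument.

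For part (ii), I would argue by backward induction on $k$, showing simultaneously that (a) the encoder's best response at stage $k$ is affine in $\mathcal{I}_k^e$, and (b) the optimal cost-to-go from stage $k$ onward, viewed as a function of $\mathcal{I}_k^e$, is a quadratic form plus a constant. At the terminal stage $N-1$, given the affine decoder $\vuu_{N-1}=K_{N-1}\vyy_{[0,N-1]}+\mathbf{d}_{N-1}$, the conditional expected cost
\[
\mathbb{E}\!\left[\|\vmm_{N-1}-K_{N-1}\vyy_{[0,N-1]}-\mathbf{d}_{N-1}-\vb\|^2+\lambda\|\vxx_{N-1}\|^2\,\Big|\,\mathcal{I}_{N-1}^e\right]
\]
is a strictly convex quadratic in $\vxx_{N-1}$ (with known $\vyy_{[0,N-2]}$ and $\vmm_{N-1}$), so the first-order condition yields $\vxx_{N-1}^*$ as an affine function of $(\vmm_{N-1},\vyy_{[0,N-2]})$ and, after taking expectation, produces a quadratic cost-to-go in $\mathcal{I}_{N-1}^e$. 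For the inductive step at stage $k<N-1$, I would write the total cost to go from stage $k$ as $\mathbb{E}[c_k^e+V_{k+1}(\mathcal{I}_{k+1}^e)\mid \mathcal{I}_k^e]$, substitute $\mathcal{I}_{k+1}^e=\{\mathcal{I}_k^e,\vmm_{k+1},\vyy_k\}$ with $\vyy_k=\vxx_k+\vww_k$ and $\vmm_{k+1}=G\vmm_k+\vvv_k$, and use the inductive assumption that $V_{k+1}$ is quadratic to reduce the problem to minimizing a quadratic in $\vxx_k$; the minimizer is then affine, and after integrating over $\vww_k,\vvv_k$ one obtains that $V_k$ is again quadratic in $\mathcal{I}_k^e$ up to an additive constant.

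The main obstacle will be the bookkeeping in the induction for part (ii): one must verify that the dependence of $V_{k+1}$ on the encoder's future inputs $\vyy_{[0,k]}$ (which enter the affine decoders at all later stages via the information sets) results in $V_{k+1}(\mathcal{I}_{k+1}^e)$ being jointly quadratic in $(\vmm_{[0,k+1]},\vyy_{[0,k]})$, so that expansion in $\vxx_k$ yields a finite-dimensional quadratic whose Hessian in $\vxx_k$ is positive definite (this is where the power term $\lambda\|\vxx_k\|^2$ or the channel noise covariance structure guarantees solvability). Once convexity is confirmed, the first-order optimality conditions give $\vxx_k^*$ as an affine functional of $(\vmm_{[0,k]},\vyy_{[0,k-1]})$, closing the induction. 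A remark on the scalar case: no additional structure is needed since all matrices collapse to scalars and strict convexity reduces to $\lambda+(\text{positive term})>0$.
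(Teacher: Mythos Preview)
Your argument for part (i) is essentially identical to the paper's: both reduce the decoder's best response to the stage-wise conditional expectation $\mathbb{E}[\vmm_k\mid\vyy_{[0,k]}]$ and invoke joint Gaussianity of $(\vmm_k,\vyy_{[0,k]})$ under affine encoding.

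For part (ii), the paper gives no self-contained proof; it simply states that ``for a fixed affine decoder, the optimal encoder can be computed algebraically'' and defers to a thesis. Your backward-induction argument with quadratic value functions is a correct and standard way to fill this in, and is almost certainly what the deferred reference does. One correction, though: your remark that ``the channel noise covariance structure guarantees solvability'' is not right---the noise covariance does not by itself make the quadratic in $\vxx_k$ strictly convex. What does hold is that the cost-to-go from stage $k$, with all future (affine) encoders and (affine) decoders plugged in, is a sum of terms of the form $\|\cdot\|^2$ in which every argument is affine in $\vxx_k$; hence the objective is \emph{convex} in $\vxx_k$ regardless of $\lambda$, and \emph{strictly} convex once $\lambda>0$ via the $\lambda\|\vxx_k\|^2$ term. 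When $\lambda=0$ the minimizer need not be unique, but an affine minimizer always exists (e.g.\ via the Moore--Penrose pseudoinverse in the first-order condition), and the resulting $V_k$ is still quadratic. With this adjustment your induction closes cleanly.
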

\begin{proof}
(i) Let the encoder policies be $\mathbf{x}_k = \gamma_k^e(\mathbf{m}_{[0,k]},\mathbf{y}_{[0,k-1]}) = \sum_{i=0}^{k} A_{k,i}\,\mathbf{m}_i + \sum_{i=0}^{k-1} B_{k,i}\,\mathbf{y}_i + \mathbf{C}_k$ where $A_{k,i}$ and $B_{k,i}$ are $n\times n$ matrices, and $\mathbf{C}_k$ is $n\times1$ vector for $k\leq N-1$ and $i\leq k$. Then, the optimal decoder actions are $\mathbf{u}^{*}_{k}=\mathbb{E}[\mathbf{m}_{k}|\mathcal{I}_{k}^d]=\mathbb{E}[\mathbf{m}_{k}|\mathbf{y}_{[0,k]}]$ for $k\leq N-1$. Notice that $\mathbf{y}_{[0,k]}$ is multivariate Gaussian since $\mathbf{y}_k=\mathbf{x}_k+\mathbf{w}_k$. This proves that $\gamma_{k}^{*,d}(\mathcal{I}_{k}^d)$ is an affine function of $\mathbf{y}_{[0,k]}$ due to the joint Gaussianity of $\mathbf{m}_{k}$ and $\mathbf{y}_{[0,k]}$.\newline	
(ii) For a fixed affine decoder, the optimal encoder can be computed algebraically (for a detailed derivation, see \cite[Theorem 3.5.1]{SS_PhD}). \end{proof}

While it provides a structural result on the plausibility of affine equilibria, Theorem~\ref{thm:multiAffineEncoder} does not lead to a conclusion about the existence of an informative equilibrium. It may be tempting to apply fixed point theorems (such as Brouwer's fixed point theorem \cite{basols99}) to establish the existence of informative equilibria; however, that there always exist a non-informative equilibrium for the cheap talk game also applies to the signaling game \cite{tacWorkArxiv}. Later on, we will make information theoretic arguments (in Theorem~\ref{thm:itNStage}) for the existence of informative equilibria for the Stackelberg setup, but this is not feasible for the Nash setup. However, the informativeness analysis of the $2$-stage signaling-game can be accomplished by analyzing the fixed points of the invariant set of affine policies in Theorem~\ref{thm:multiAffineEncoder} as follows (for a proof, due to space constraints, see \cite{SS_PhD}):
\begin{thm}
	For the $2$-stage signaling game setup under affine encoder and decoder assumptions, 
	\begin{enumerate}
		\item[(i)] If $\lambda>\max\Big\{{(g^2+1)\sigma_{M_0}^2 \over \sigma_{W_0}^2}, {\sigma_{M_1}^2\over\sigma_{W_1}^2}\Big\}$, then there does not exist an informative affine equilibrium.
		\item[(ii)] If ${\sigma_{M_1}^2\over\sigma_{W_1}^2}<\lambda \leq {(g^2+1)\sigma_{M_0}^2 \over \sigma_{W_0}^2}$, then the second stage message $m_1$ is not used in the game.
		\item[(iii)] If ${(g^2+1)\sigma_{M_0}^2 \over \sigma_{W_0}^2}<\lambda \leq {\sigma_{M_1}^2\over\sigma_{W_1}^2}$, the equilibrium is informative if and only if $\sigma_{M_1}^2\geq4b^2$ and $\max\left\{{\sigma_{M_1}^2-2b^2-\sqrt{\sigma_{M_1}^2}\sqrt{\sigma_{M_1}^2-4b^2}\over2\sigma_{W_1}^2}, \left(g^2+1\right){\sigma_{M_0}^2 \over \sigma_{W_0}^2}\right\} < \lambda<{\sigma_{M_1}^2-2b^2+\sqrt{\sigma_{M_1}^2}\sqrt{\sigma_{M_1}^2-4b^2}\over2\sigma_{W_1}^2}$.
	\end{enumerate}
\end{thm}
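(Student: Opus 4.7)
My plan is to analyze affine equilibria by backward induction, parametrizing the scalar slope coefficients $\alpha_0$ (stage-0 encoder on $m_0$) and $\alpha_{1,1}$ (stage-1 encoder on $m_1$), and then splitting into cases based on which of these can be nonzero. First, treating stage 1 conditionally on $y_0$, the inner problem is a single-stage Gaussian signaling game whose effective source variance is $\sigma_{m_1|y_0}^2 = g^2\sigma_{m_0|y_0}^2 + \sigma_{V_0}^2 \leq \sigma_{M_1}^2$. Setting the stage-1 encoder's derivative to zero yields $x_1 = \tfrac{\kappa_{1,1}}{\lambda+\kappa_{1,1}^2}(m_1-\kappa_{1,0}y_0-\ell_1-b)$ (so automatically the coefficient on $m_0$ vanishes), and composing with the Kalman-type decoder $\kappa_{1,1}=\alpha_{1,1}\sigma_{m_1|y_0}^2/(\alpha_{1,1}^2\sigma_{m_1|y_0}^2+\sigma_{W_1}^2)$ reduces the slope fixed point to $\lambda(\alpha_{1,1}^2\sigma_{m_1|y_0}^2+\sigma_{W_1}^2)^2 = \sigma_{m_1|y_0}^2\sigma_{W_1}^2$. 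Hence $\alpha_{1,1}\neq 0$ demands $\lambda<\sigma_{m_1|y_0}^2/\sigma_{W_1}^2\leq\sigma_{M_1}^2/\sigma_{W_1}^2$. A short additional computation, equating coefficients of $y_0$ in the identity $u_1 = \mathbb{E}[m_1\,|\,y_0,y_1]$, gives the useful fact $\kappa_{1,0}=g\kappa_0$ at the joint fixed point.

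Substituting the stage-1 best response into the stage-0 encoder's total cost and optimizing the constant offset $c_0$ separately (it decouples into a $\lambda c_0^2$ penalty), the expected cost as a function of $\rho_0=\alpha_0^2\sigma_{M_0}^2$ takes the form $A\,\sigma_{M_0}^2\sigma_{W_0}^2/(\rho_0+\sigma_{W_0}^2)+\lambda\rho_0+\text{const}$, with $A\triangleq 1+g^2\lambda/(\lambda+\kappa_{1,1}^2)$; the $(1+g^2)$ structure emerges because the first-stage transmission simultaneously serves the decoder's estimates of $m_0$ and of $gm_0$ (the part of $m_1$ that $y_0$ predicts). Minimizing yields $\lambda(\rho_0+\sigma_{W_0}^2)^2=A\sigma_{M_0}^2\sigma_{W_0}^2$, so $\rho_0>0$ requires $\lambda<A\sigma_{M_0}^2/\sigma_{W_0}^2$. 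Since $A\leq 1+g^2$, with strict inequality whenever $\kappa_{1,1}\neq 0$, the most permissive threshold is $(g^2+1)\sigma_{M_0}^2/\sigma_{W_0}^2$, and this threshold is tightened whenever stage 1 is itself informative.

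The three cases now follow cleanly. Case (i): $\lambda>\sigma_{M_1}^2/\sigma_{W_1}^2$ forces $\alpha_{1,1}=0$, hence $A=1+g^2$; then $\lambda>(g^2+1)\sigma_{M_0}^2/\sigma_{W_0}^2$ forces $\alpha_0=0$ as well. Case (ii): again $\lambda>\sigma_{M_1}^2/\sigma_{W_1}^2$ forces $\alpha_{1,1}=0$, which is exactly the statement that $m_1$ is not used (while $\alpha_0\neq 0$ remains possible since $\lambda\leq(g^2+1)\sigma_{M_0}^2/\sigma_{W_0}^2$). Case (iii): because $\lambda>(g^2+1)\sigma_{M_0}^2/\sigma_{W_0}^2\geq A\sigma_{M_0}^2/\sigma_{W_0}^2$ for every admissible $A$, one must have $\alpha_0=0$ at any equilibrium, irrespective of whether $\alpha_{1,1}$ vanishes; consequently $\sigma_{m_1|y_0}^2=\sigma_{M_1}^2$ and stage 1 degenerates into a standalone single-stage Gaussian signaling game with source variance $\sigma_{M_1}^2$, bias $b$, and penalty $\lambda$. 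For that reduced game I would compute the cost gap between the informative fixed point and the babbling equilibrium; with the encoder's optimal constant $c=-\kappa b/\lambda$, one obtains that the gap equals $(\rho_1/\sigma_{W_1}^2)(b^2-\rho_1\lambda)$, and combining with the fixed-point identity $(\rho_1+\sigma_{W_1}^2)^2=\sigma_{M_1}^2\sigma_{W_1}^2/\lambda$ reduces the "informative beats babbling" condition to the quadratic inequality $\sigma_{W_1}^2\lambda-\sigma_{W_1}\sigma_{M_1}\sqrt{\lambda}+b^2<0$ in $\sqrt{\lambda}$. This is solvable iff $\sigma_{M_1}^2\geq 4b^2$ and yields precisely the interval $\lambda_-<\lambda<\lambda_+$ written in the theorem; intersecting with the case-(iii) constraint $\lambda>(g^2+1)\sigma_{M_0}^2/\sigma_{W_0}^2$ produces the stated "$\max\{\cdot,\cdot\}$" lower bound.

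The step I expect to be the main obstacle is the claim in case (iii) that $\alpha_0=0$ must hold even when one permits stage 1 to be informative. The key observation is that the decoder-optimality identity $\kappa_{1,0}=g\kappa_0$ forces the effective multiplier $A$ to lie strictly below $1+g^2$ whenever $\kappa_{1,1}\neq 0$, so that the threshold $A\sigma_{M_0}^2/\sigma_{W_0}^2$ on $\lambda$ for informative stage 0 is actually smaller than $(g^2+1)\sigma_{M_0}^2/\sigma_{W_0}^2$; thus this threshold is always violated under the case-(iii) hypothesis, forcing $\alpha_0=0$ uniformly, after which the bias-dependent window on $\lambda$ follows from the single-stage comparison above.
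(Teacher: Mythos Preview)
The paper does not actually prove this theorem; it only names the method (``analyzing the fixed points of the invariant set of affine policies in Theorem~\ref{thm:multiAffineEncoder}'') and defers the details to \cite{SS_PhD}. Your proposal follows precisely this strategy, and the computations you outline are correct: the stage-1 slope fixed point $\lambda(\alpha_{1,1}^2\sigma_{m_1|y_0}^2+\sigma_{W_1}^2)^2=\sigma_{m_1|y_0}^2\sigma_{W_1}^2$, the identity $\kappa_{1,0}=g\kappa_0$, the stage-0 fixed point $\lambda(\rho_0+\sigma_{W_0}^2)^2=A\sigma_{M_0}^2\sigma_{W_0}^2$ with $A=1+g^2\lambda/(\lambda+\kappa_{1,1}^2)\leq 1+g^2$, and the reduction of case~(iii) to a single-stage game with source variance $\sigma_{M_1}^2$ all check out. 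Your quadratic $\sigma_{W_1}^2\lambda-\sigma_{W_1}\sigma_{M_1}\sqrt{\lambda}+b^2<0$ indeed produces the roots $\lambda_\pm=(\sigma_{M_1}^2-2b^2\pm\sigma_{M_1}\sqrt{\sigma_{M_1}^2-4b^2})/(2\sigma_{W_1}^2)$ stated in the theorem.

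Two places deserve sharper wording. First, the cost expression $A\sigma_{M_0}^2\sigma_{W_0}^2/(\rho_0+\sigma_{W_0}^2)+\lambda\rho_0$ that you ``minimize over $\rho_0$'' is the form one gets after substituting the decoder's best response, which is a Stackelberg-style move; for the Nash analysis you should instead write the encoder's first-order condition at fixed $\kappa_0$, namely $\alpha_0(\lambda+A\kappa_0^2)=A\kappa_0$, and then close the loop with $\kappa_0=\alpha_0\sigma_{M_0}^2/(\rho_0+\sigma_{W_0}^2)$. This yields the same equation, but the justification is cleaner. Second, in case~(iii) you invoke the comparison ``informative beats babbling'' without saying why this is the relevant criterion for a Nash statement (mere existence of the informative fixed point only needs $\lambda<\sigma_{M_1}^2/\sigma_{W_1}^2$, which is automatic here). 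This is the encoder-preference selection rule carried over from the single-stage analysis in \cite{tacWorkArxiv}; you should make that connection explicit rather than leaving it implicit.
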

The above analysis can be carried over to the $N$-stage signaling game; however, this would involve $(3N^2+5N)/2$ equations and as many unknowns.

\subsection{Stackelberg Equilibrium Analysis of Multi-Stage Quadratic Gaussian Signaling Games}

Here, the signaling game is analyzed under the Stackelberg concept. 

\subsubsection{Multi-Stage Stackelberg Equilibria for Scalar Gauss-Markov Sources}
\label{sec:stackelbergSignalScalar}
The conditions for informative Stackelberg equilibria with scalar sources are characterized below. The proof is in Appendix~\ref{appendixProofStackelbergSingaling}.

\begin{thm}\label{thm:itNStage}
	If \mbox{ $\lambda\geq\max_{k\leq N-1}{\sigma_{M_k}^2\over\sigma_{W_k}^2}\sum_{i=0}^{N-k-1} g^{2i}$}, there does not exist an informative (affine or non-linear) equilibrium in the $N$-stage scalar signaling game under the Stackelberg assumption; i.e., the only equilibrium is the non-informative one. Otherwise, an equilibrium has to be always linear.
\end{thm}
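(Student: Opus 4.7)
Proof plan.

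The plan is to use the Stackelberg reduction together with a Shannon-type lower bound to pin down the condition for babbling, and to use equality conditions in the information-theoretic inequalities to force linearity in the informative regime. Since the decoder commits to its MMSE best response $U_k = \mathbb{E}[M_k \mid Y_{[0,k]}]$ at every stage, orthogonality of $M_k-U_k$ with every measurable function of $Y_{[0,k]}$ reduces the encoder's cost (up to the additive constant $Nb^2$) to
\[
J^e = \sum_{k=0}^{N-1}\bigl(D_k + \lambda P_k\bigr), \qquad D_k = \mathbb{E}[(M_k-U_k)^2],\ P_k = \mathbb{E}[X_k^2].
\]
The non-informative strategy $X_k \equiv 0$ yields $D_k = \sigma_{M_k}^2$ and $P_k = 0$, so the problem reduces to deciding whether any causal policy can make $J^e$ strictly smaller than $\sum_k \sigma_{M_k}^2$.

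First I would obtain the stage-$k$ lower bound $D_k \ge \sigma_{M_k}^2/\prod_{i=0}^{k}(1+P_i/\sigma_{W_i}^2)$ by combining the Shannon lower bound $I(M_k;Y_{[0,k]}) \ge \tfrac12\log(\sigma_{M_k}^2/D_k)$ for the Gaussian source with data processing and the feedback capacity of a memoryless Gaussian channel $I(M_k;Y_{[0,k]}) \le \tfrac12\sum_{i=0}^k\log(1+P_i/\sigma_{W_i}^2)$. Next, using the elementary inequality $1 - \prod_i(1+x_i)^{-1} \le \sum_i x_i$ together with the innovation decomposition $M_{j+i} = g^i M_j + (\text{noise independent of }M_{[0,j]})$, I would bound the total reduction $\sum_k (\sigma_{M_k}^2 - D_k)$ by a linear combination of the $P_j/\sigma_{W_j}^2$ in which the coefficient of $P_j$ matches the theorem's threshold $\sigma_{M_j}^2\sum_{i=0}^{N-j-1} g^{2i}$. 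The key observation is that transmission $X_j$ can only carry information about $M_{[0,j]}$, and any reduction in the mean-square error of the estimate of $M_j$ propagates to a reduction of $g^{2i}$ times that amount in the later error for $M_{j+i}$.

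Under the theorem's hypothesis, with $T_j \triangleq \sum_{i=0}^{N-j-1} g^{2i}$, the previous step yields $\sum_k D_k + \lambda\sum_j P_j \ge \sum_k \sigma_{M_k}^2$ for every admissible causal policy, with equality at $P_j \equiv 0$, so the only Stackelberg equilibrium is the non-informative one. In the complementary case I would argue linearity in two parts. Achievability: an explicit linear Kalman-type encoder that transmits a scaled innovation (absorbing the feedback term) attains the information-theoretic lower bound stage-by-stage, so an informative equilibrium exists inside the class of linear policies. Necessity: the rate-distortion inequality is tight iff $(M_k, Y_{[0,k]})$ is jointly Gaussian, and the feedback capacity inequality is tight iff each $Y_i \mid Y_{[0,i-1]}$ is Gaussian; combined with the causal restriction $X_k = \gamma_k^e(M_{[0,k]}, Y_{[0,k-1]})$ and the joint Gaussianity of $M_{[0,k]}$, these equality conditions force $\gamma_k^e$ to be affine, so any Stackelberg-optimal policy that attains the lower bound must be linear.

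The main obstacle is obtaining the sharp threshold $\sigma_{M_j}^2 T_j/\sigma_{W_j}^2$ rather than the cruder $(\sum_{k\ge j}\sigma_{M_k}^2)/\sigma_{W_j}^2$ that the direct chain of inequalities yields when $\sigma_{M_{j+i}}^2 \ge g^{2i}\sigma_{M_j}^2$ is used loosely. Matching the theorem's exact constant requires a per-innovation treatment, using $\tilde V_0 = M_0$ and $\tilde V_j = V_{j-1}$ for $j\ge 1$ so that $M_k = \sum_{j=0}^k g^{k-j}\tilde V_j$ with independent innovations, applying the MMSE lower bound separately to each innovation, and then reassembling the contributions weighted by $g^{2(k-j)}$.
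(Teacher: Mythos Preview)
Your reduction of the Stackelberg problem to the team problem $\min\sum_k(D_k+\lambda P_k)$ matches the paper. The gap is in how you bound $D_k$. Your product bound $D_k\ge\sigma_{M_k}^2/\prod_{i\le k}(1+P_i/\sigma_{W_i}^2)$ is correct but too loose: combined with $1-\prod(1+x_i)^{-1}\le\sum x_i$ it attaches the coefficient $\sum_{k\ge j}\sigma_{M_k}^2$ to $P_j/\sigma_{W_j}^2$, which (since $\sigma_{M_{j+i}}^2\ge g^{2i}\sigma_{M_j}^2$) is strictly larger than the theorem's $\sigma_{M_j}^2\sum_i g^{2i}$ whenever the innovation noise is nonzero. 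Your proposed repair via the innovation decomposition $M_k=\sum_{j\le k}g^{k-j}\tilde V_j$ does not work as stated: for a general (non-linear) encoder the MMSE error does \emph{not} split as $D_k=\sum_j g^{2(k-j)}\,\mathbb{E}[(\tilde V_j-\mathbb{E}[\tilde V_j\mid Y_{[0,k]}])^2]$, because the cross terms $\mathbb{E}\bigl[\mathbb{E}[\tilde V_i\mid Y]\,\mathbb{E}[\tilde V_j\mid Y]\bigr]$ need not vanish. So you cannot simply ``apply the MMSE lower bound separately to each innovation and reassemble''.

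The paper obtains the sharp constant differently. It first derives the tight \emph{recursive} lower bound
\[
\Delta_k \;=\; \frac{\sigma_{V_{k-1}}^2+g^2\Delta_{k-1}}{1+P_k/\sigma_{W_k}^2},
\qquad \Delta_0=\frac{\sigma_{M_0}^2}{1+P_0/\sigma_{W_0}^2},
\]
for $D_k\ge\Delta_k$, using the Markov chain $m_k\leftrightarrow m_{k-1}\leftrightarrow y_{[0,k-1]}$ to write $\mathbb{E}[(m_k-\mathbb{E}[m_k\mid y_{[0,k-1]}])^2]=\sigma_{V_{k-1}}^2+g^2\mathbb{E}[(m_{k-1}-\mathbb{E}[m_{k-1}\mid y_{[0,k-1]}])^2]$ and then capping the one-step gain by $\widehat C_k=\tfrac12\log(1+P_k/\sigma_{W_k}^2)$. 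The threshold is then extracted by a \emph{backward induction} on the powers: one first shows $\partial J^{e,\mathrm{lower}}/\partial P_{N-1}\ge\lambda-\sigma_{M_{N-1}}^2/\sigma_{W_{N-1}}^2$, so $P_{N-1}^*=0$; with $P_{N-1}^*=\cdots=P_{k+1}^*=0$ already forced, the recursion collapses to $\Delta_l=g^{2(l-k)}\Delta_k+(\text{terms independent of }P_k)$ for $l>k$, whence $\partial J^{e,\mathrm{lower}}/\partial P_k\ge\lambda-(\sigma_{M_k}^2/\sigma_{W_k}^2)\sum_{i=0}^{N-k-1}g^{2i}$. It is precisely this backward substitution, not a global linearization, that produces the exact factor $\sum_i g^{2i}$ without picking up the extra innovation variances. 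Your plan lacks this step.

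For the linearity half, the paper does not argue via equality conditions in rate--distortion and capacity as you propose; it simply observes that after the Stackelberg reduction the problem is a classical joint source--channel team problem with Gauss--Markov source, memoryless Gaussian channel, noiseless feedback and quadratic costs, and invokes known results (the recursion $\Delta_k$ is achieved with equality by linear innovation encoders). Your equality-condition route is heuristically reasonable but would need care: tightness of the Shannon lower bound requires $M_k-\mathbb{E}[M_k\mid Y_{[0,k]}]$ to be Gaussian and independent of $Y_{[0,k]}$, and converting this into ``$\gamma_k^e$ is affine'' for every $k$ simultaneously is more delicate than your one-line sketch suggests.
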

Now consider the multi-stage Stackelberg signaling game with a discounted infinite horizon and a discount factor $\beta\in\left(0,1\right)$; i.e., $J^e(\gamma^e,\gamma^d) = \mathbb{E}\left[\sum_{i=0}^{\infty} \beta^i \left(\left(m_i-u_i-b\right)^2 + \lambda x_i^2\right)\right]$ and $J^d(\gamma^e,\gamma^d) = \mathbb{E}\left[\sum_{i=0}^{\infty} \beta^i \left(m_i-u_i\right)^2\right]$. The proof is in Appendix~\ref{appendixProofStackelbergSingalingInf}.
\begin{cor}\label{thm:itInfStage}
	If \mbox{ $\lambda\geq\max_{k=0,1,\ldots}{\sigma_{M_k}^2\over\sigma_{W_k}^2}{1\over1-\beta g^2}$} where $\beta g^2<1$, there does not exist an informative (affine or non-linear) equilibrium in the infinite horizon discounted multi-stage Stackelberg signaling game for scalar Gauss-Markov sources.	
\end{cor}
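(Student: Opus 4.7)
The plan is to reduce Corollary~\ref{thm:itInfStage} to a discounted analogue of Theorem~\ref{thm:itNStage} and then pass to the limit as $N \to \infty$. The proof of Theorem~\ref{thm:itNStage} (as sketched in the appendix it references) should use that under the Stackelberg assumption the decoder plays $u_k = \mathbb{E}[m_k \mid y_{[0,k]}]$, so the encoder's cost decomposes into $b^2$-terms, per-stage MSEs $\mathbb{E}[(m_k - \mathbb{E}[m_k \mid y_{[0,k]}])^2]$, and power terms $\lambda \mathbb{E}[x_k^2]$. Since $m_{k+i} = g^i m_k + (\text{noise independent of } m_k)$, information revealed about $m_k$ at stage $k$ reduces every future MSE at stage $k+i$ by a factor proportional to $g^{2i}$, producing the telescoping sum $\sum_{i=0}^{N-k-1} g^{2i}$ that appears in the finite-horizon threshold.

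First, I would re-run the Theorem~\ref{thm:itNStage} argument verbatim on the \emph{truncated discounted} problem with horizon $N$ and per-stage weights $\beta^k$. The only structural change is that the future MSE reduction at stage $k+i$ is now multiplied by $\beta^{k+i}$ rather than by $1$; after factoring out the common $\beta^k$ that multiplies the stage-$k$ power term, the relevant sensitivity coefficient at stage $k$ becomes $\sum_{i=0}^{N-k-1} (\beta g^2)^i$ in place of $\sum_{i=0}^{N-k-1} g^{2i}$. Repeating the linearity conclusion and the sign comparison from Theorem~\ref{thm:itNStage} then yields: if
\[
\lambda \;\geq\; \max_{k \leq N-1}\; \frac{\sigma_{M_k}^2}{\sigma_{W_k}^2}\sum_{i=0}^{N-k-1}(\beta g^2)^i,
\]
the only equilibrium of the $N$-stage discounted Stackelberg game is non-informative, and otherwise any equilibrium is linear.

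Second, I would invoke $\beta g^2 < 1$ to let $N \to \infty$. The geometric series $\sum_{i=0}^{N-k-1}(\beta g^2)^i$ is monotone increasing in $N$ with limit $\tfrac{1}{1-\beta g^2}$, so the hypothesis of Corollary~\ref{thm:itInfStage} dominates the finite-horizon threshold for every $N$ simultaneously. Hence no truncation admits an informative equilibrium. To transfer this to the infinite-horizon game, I would argue by contradiction: suppose $(\gamma^{*,e}, \gamma^{*,d})$ is an informative equilibrium of the infinite-horizon discounted game with strictly lower cost than the babbling baseline. Uniform boundedness of the Gauss–Markov variances together with the geometric decay $\beta^i$ makes the tail $\sum_{i \geq N}\beta^i(\cdot)$ vanish uniformly, so for $N$ sufficiently large the $N$-stage truncation of $\gamma^{*,e}$ would also strictly beat the non-informative baseline in the $N$-stage discounted game, contradicting step one.

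The main obstacle will be the last step, namely justifying that infinite-horizon Stackelberg commitment cannot produce informative equilibria that only pay off asymptotically and would be destroyed by any truncation. The cleanest way to handle this is to exploit the linearity conclusion inherited from the finite-horizon analysis (informative equilibria, when they exist, must be linear) together with the fact that under linear policies every expectation appearing in the cost is a convergent geometric series in $\beta g^2$, so the infinite-horizon optimality conditions at each stage reduce to a finite-dimensional system whose coefficients are exactly the limits of the finite-horizon ones. Under this structural reduction, monotone and dominated convergence make the tail argument rigorous and close the proof.
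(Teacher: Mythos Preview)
Your plan is correct and essentially matches the paper's approach through your first two steps: re-running the finite-horizon lower-bound analysis with the discount weights $\beta^k$ produces exactly the modified sensitivity coefficient $\sum_{i=0}^{N-k-1}(\beta g^2)^i$, whose limit under $\beta g^2<1$ gives the threshold $\tfrac{1}{1-\beta g^2}$. This is precisely what the paper does.

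The only real difference is in your third step. You propose a truncation-plus-contradiction argument (truncate an alleged informative equilibrium, use tail decay to beat the babbling baseline on a finite horizon, contradict step one) and then worry about equilibria that ``only pay off asymptotically.'' The paper avoids this detour entirely by working with the information-theoretic \emph{lower bound} $J_0^{e,\mathrm{lower}}=\sum_i \beta^i(\Delta_i+\lambda P_i+b^2)$ throughout and invoking the single inequality
\[
\inf_{\gamma^e}\;\limsup_{N\to\infty} J^e\big(\gamma^e_{[0,N-1]},\gamma^d_{[0,N-1]}\big)\;\ge\;\limsup_{N\to\infty}\;\inf_{\gamma^e_{[0,N-1]}}\;\sum_{i=0}^{N-1}\beta^i\big(\Delta_i+\lambda P_i+b^2\big).
\]
Since the right-hand side equals the babbling cost whenever $\lambda$ exceeds the limiting threshold, and babbling attains that cost with equality, no informative policy can do strictly better. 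This sidesteps any need to truncate an equilibrium policy, to invoke linearity of infinite-horizon equilibria, or to control tails of specific strategies. Your route would work, but the paper's $\inf\limsup\ge\limsup\inf$ shortcut is what makes the ``obstacle'' you flagged a non-issue.
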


\subsubsection{Multi-Stage Stackelberg Equilibria for Vector Gauss-Markov Sources}
\label{sec:stackelbergSignalMulti}
Linear policies are optimal for scalar sources as shown in Section~\ref{sec:stackelbergSignalScalar}. Before analyzing the multi-dimensional setup, it will be appropriate to review the optimality of linear policies in Gaussian setups for the classical communication theoretic setup when the bias term is absent: Optimality of linear coding policies for scalar Gaussian source-channel pairs with noiseless feedback has been known since 1960s, see e.g. \cite{Goblick}. Optimal linear encoders for single-stage setups have been studied in \cite{bas80,TangulBasaretal}. When the source and the channel are multi-dimensional, linear policies may not be optimal; see \cite{Pilc}, \cite[Chapter 11]{YukselBasarBook} and \cite{ZaidiChapter} for a detailed discussion and literature review. It is evident from Theorem \ref{thm:multiAffineEncoder} that when the encoder is linear, the optimal decoder is linear. In this case, a relevant problem is to find the optimal Stackelberg policy among the linear or affine class. In the following, a dynamic programming approach is adapted to find such Stackelberg equilibria. Building on the optimality of linear innovation encoders, we restrict the analysis to such encoders; i.e., we consider a sub-optimal scenario. Our analysis builds on and generalizes the arguments in \cite[Theorem 3]{ZaidiChapter} and \cite{basban94}. The proof is in Appendix~\ref{appendixProofMultiStackelberg}.
\begin{thm}\label{thm:multiStackelberg}
	Suppose that $G$, $\Sigma_{\vMz}$ and $\Sigma_{\vvv_k}$ are diagonal. Suppose further that the innovation is given by $\vmt_k \triangleq \vmm_k-\mathbb{E}[\vmm_k|\vyy_{[0,k-1]}]$ with $\vmt_0 = \vmm_0$, and that the encoder linearly encodes the innovation. Then, an optimal such linear policy can be computed through dynamic programming with value functions $V_k\left(\Sigma_{\vmt_k}\right)\triangleq\tr\left(K_k \Sigma_{\vmt_k} + L_k\right)$ that satisfy the terminal condition $V_N\left(\Sigma_{\vmt_N}\right)=0$ with diagonal $K_k$ matrices for $k\leq N-1$. 	
\end{thm}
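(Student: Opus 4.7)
The plan is a finite-horizon dynamic programming argument running backward from stage $N-1$, where the ``state'' is the innovation covariance $\Sigma_{\vmt_k}$. Under the Stackelberg commitment, the decoder's best response to any linear encoder is the MMSE estimator $\vuu_k=\mathbb{E}[\vmm_k\mid\vyy_{[0,k]}]$, which for a linear innovation encoder $\vxx_k=A_k\vmt_k$ is realized by a Kalman filter. First I would record three consequences: (a) with $P_{k|k}\triangleq\Sigma_{\vmt_k}-\Sigma_{\vmt_k}A_k^T(A_k\Sigma_{\vmt_k}A_k^T+\Sigma_{\vww_k})^{-1}A_k\Sigma_{\vmt_k}$, the stage distortion equals $\tr(P_{k|k})$; (b) since $\vmm_k$ and $\vuu_k$ are jointly zero-mean Gaussian and the estimator is unbiased, $\mathbb{E}\|\vmm_k-\vuu_k-\vb\|^2=\tr(P_{k|k})+\|\vb\|^2$, so the bias contributes only an additive constant; (c) because $\vvv_k$ is independent of $\vyy_{[0,k]}$, the state update is $\Sigma_{\vmt_{k+1}}=GP_{k|k}G^T+\Sigma_{\vvv_k}$, inherited from $\vmt_{k+1}=G(\vmm_k-\mathbb{E}[\vmm_k\mid\vyy_{[0,k]}])+\vvv_k$. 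The soft power term contributes $\lambda\tr(A_k\Sigma_{\vmt_k}A_k^T)$.

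Assembling these pieces yields the Bellman equation
\begin{equation*}
V_k(\Sigma_{\vmt_k})=\min_{A_k}\Big\{\tr(P_{k|k})+\lambda\tr(A_k\Sigma_{\vmt_k}A_k^T)+V_{k+1}\!\left(GP_{k|k}G^T+\Sigma_{\vvv_k}\right)\Big\}+\|\vb\|^2,
\end{equation*}
with terminal condition $V_N\equiv 0$. I would then propagate the ansatz $V_{k+1}(\Sigma)=\tr(K_{k+1}\Sigma+L_{k+1})$ with diagonal $K_{k+1}$ (trivially true at $k=N-1$ since $K_N=L_N=0$). Substituting and writing $Q_k\triangleq I+G^TK_{k+1}G$ reduces the stagewise optimization to $\min_{A_k}\{\tr(Q_kP_{k|k})+\lambda\tr(A_k^TA_k\Sigma_{\vmt_k})\}$; by the diagonal hypotheses on $G$, $\Sigma_{\vMz}$, $\Sigma_{\vvv_k}$, $\Sigma_{\vww_k}$ together with the inductive diagonality of $K_{k+1}$, each of $Q_k$, $\Sigma_{\vmt_k}$, $\Sigma_{\vww_k}$ is diagonal. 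A parallel-channel argument in the spirit of \cite[Theorem 3]{ZaidiChapter} and \cite{basban94} then shows the optimum is attained by a diagonal $A_k$, so the problem decouples into $n$ scalar Stackelberg subproblems already solved in Section~\ref{sec:stackelbergSignalScalar}. The optimal diagonal $P_{k|k}$ plugs back into the Bellman equation to give $V_k(\Sigma_{\vmt_k})=\tr(K_k\Sigma_{\vmt_k}+L_k)$ with $K_k$ diagonal, closing the induction and producing scalar Riccati-like updates for the diagonal entries of $K_k$ and explicit updates for $L_k$.

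The main obstacle I anticipate is justifying the reduction to diagonal $A_k$. The Pilc/Zaidi results cited in the excerpt show that for general vector Gaussian source-channel pairs linear coding is suboptimal, so some structural hypothesis is essential; what saves us here is that all native covariances and the inherited cost-shaping matrix $Q_k$ share the canonical eigenbasis. In this aligned setting a Hadamard-type inequality (or, equivalently, a coordinate-wise sign-flip averaging) shows that replacing $A_k$ by its diagonal part does not increase $\tr(Q_kP_{k|k})+\lambda\tr(A_k^TA_k\Sigma_{\vmt_k})$, essentially because the $n$ scalar MMSE problems are independent once the eigenbases are aligned. Once this structural step is in place, verifying that the resulting $K_k$ and $L_k$ are diagonal and computable by the stated dynamic program is purely algebraic, and the per-coordinate Riccati structure from the scalar case of Section~\ref{sec:stackelbergSignalScalar} transfers unchanged to each diagonal entry.
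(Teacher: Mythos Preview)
Your dynamic-programming backbone---Bellman recursion on $\Sigma_{\vmt_k}$, trace-affine value functions $V_k=\tr(K_k\Sigma_{\vmt_k}+L_k)$, backward induction with $K_N=L_N=0$---coincides with the paper's proof. The substantive divergence, and the gap, is in the per-stage minimization. The paper changes variables to $H_k=\Sigma_{\vww_k}^{-1/2}A_k\Sigma_{\vmt_k}^{1/2}$ and invokes \cite{bas80}: for each fixed power level the minimizer of $\tr\big(\Sigma_{\vmt_k}^{1/2}Q_k\Sigma_{\vmt_k}^{1/2}(I+H_k^TH_k)^{-1}\big)$ has the SVD-aligned form $H_k^*=\Pi_k\zeta_kP_k^T$, where $\Pi_k,P_k$ are the eigenvector matrices of $\lambda\Sigma_{\vww_k}$ and $\Sigma_{\vmt_k}^{1/2}Q_k\Sigma_{\vmt_k}^{1/2}$ and $\zeta_k$ is diagonal. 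Under the stated hypotheses (on $G,\Sigma_{\vMz},\Sigma_{\vvv_k}$ only---$\Sigma_{\vww_k}$ is \emph{not} assumed diagonal in the theorem, contrary to what you use) one checks inductively that $P_k$ and hence $K_k$ can be taken diagonal.

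Your proposed mechanism, ``replace $A_k$ by its diagonal part via sign-flip averaging,'' fails because the stage cost is not convex in $A_k$, so Jensen does not apply. Concretely, take $n=2$, $Q_k=\Sigma_{\vmt_k}=\Sigma_{\vww_k}=I$, and $A_k=\bigl(\begin{smallmatrix}0&1\\1&0\end{smallmatrix}\bigr)$: the cost is $\tr\big((I+I)^{-1}\big)+\lambda\tr(I)=1+2\lambda$, whereas the diagonal part is the zero matrix with cost $\tr(I)=2$; for $\lambda<\tfrac12$ diagonalization strictly \emph{increases} the cost. The correct statement is that the \emph{optimizer} is (permuted) diagonal, not that projection onto diagonals is cost-reducing for every $A_k$, and this requires the eigenstructure argument of \cite{bas80} (or an equivalent trace-inequality such as von Neumann's for the constrained problem), not a Hadamard/averaging step. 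Two smaller points: your appeal to Section~\ref{sec:stackelbergSignalScalar} as having ``already solved'' the scalar subproblems is off---Theorem~\ref{thm:itNStage} gives non-informativeness thresholds, not the explicit per-coordinate recursion you need (that recursion appears only at the end of the paper's proof of the present theorem); and you should not assume $\Sigma_{\vww_k}$ diagonal, since the paper's argument absorbs a general $\Sigma_{\vww_k}$ into $\Pi_k$.
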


\section{Concluding Remarks}
In this paper, we studied Nash and Stackelberg equilibria for multi-stage quadratic cheap talk and signaling games. We established qualitative (e.g. on full revelation, quantization nature, linearity, informativeness and non-informativeness) and quantitative properties (on linearity or explicit computation) of Nash and Stackelberg equilibria under misaligned objectives.

\appendix

\section{Proof of Theorem~\ref{thm:itNStage}} \label{appendixProofStackelbergSingaling}
Similar to that in Theorem~\ref{thm:multiStack}, the optimal decoder actions are  $u^{*}_{k}=\gamma_{k}^{*,d}( \mathcal{I}_{k}^d)=\mathbb{E}[m_{k}|\mathcal{I}_{k}^d]=\mathbb{E}[m_{k}|y_{[0,k]}]$, and the total encoder cost becomes $J^e(\gamma^e_{[0,N-1]},\gamma^d_{[0,N-1]}) = \mathbb{E} \left[\sum\limits_{k=0}^{N-1} \mathbb{E}[(m_k-\mathbb{E}[m_{k}|\mathcal{I}_{k}^d])^2+b^2+ \lambda x_k^2|\mathcal{I}_k^d]\right]$. 
This is an instance of problems studied in \cite{simultaneousDesign}, and can be reduced to a team problem where both the players are minimizing the same cost. The linearity of the optimal policies can be deduced from \cite{simultaneousDesign}. Here, we adapt the proof in \cite{simultaneousDesign} to our setup. From the chain rule, $I(m_k;y_{[0,k]})=I(m_k;y_{[0,k-1]})+I(m_k;y_k|y_{[0,k-1]})$. By following similar arguments to those in \cite{simultaneousDesign} and \cite[Theorem 11.3.1]{YukselBasarBook},
\begin{align*}
& I(m_k;y_k|y_{[0,k-1]}) = h(y_k|y_{[0,k-1]})-h(y_k|m_k,y_{[0,k-1]}) \\
&\quad\leq h(y_k)-h(y_k|\gamma_k^e(m_k,y_{[0,k-1]})) \\
&\quad = I\left(\gamma_k^e(m_k,y_{[0,k-1]});y_k\right) = I(x_k;y_k) \leq \sup I(x_k;y_k) \\ 
&\quad= {1\over2}\log_2\left(1+{P_k\over\sigma_{W_k}^2}\right)\triangleq \widehat{C}_k \text{ where } P_k = \mathbb{E}[x_k^2]\,.
\end{align*}
It can be seen that $m_k-\mathbb{E}[m_k|m_{k-1}]$ is orthogonal to the random variables $m_{k-1}, y_{[0,k-1]}$ where $y_{[0,k-1]}$ is included due to the Markov chain $m_k\leftrightarrow m_{k-1} \leftrightarrow (y_{[0,k-1]})$. By using this orthogonality, it follows that
\begin{align}
&\mathbb{E}[(m_k-\mathbb{E}[m_k|y_{[0,k-1]}])^2] = \mathbb{E}[(m_k-\mathbb{E}[m_k|m_{k-1}])^2] \nn\\
&\qquad\qquad\qquad\qquad + \mathbb{E}[(\mathbb{E}[m_k|m_{k-1}]-\mathbb{E}[m_k|y_{[0,k-1]}])^2] \nn\\
&\qquad\qquad\overset{(a)}{=} \mathbb{E}[(m_k-\mathbb{E}[m_k|m_{k-1}])^2] + \mathbb{E}\Big[\big(\mathbb{E}[m_k|m_{k-1}]\nn\\
&\qquad\qquad\qquad\qquad-\mathbb{E}[\mathbb{E}[m_k|m_{k-1}]|y_{[0,k-1]}]\big)^2\Big] \nn\\
&\qquad\qquad\overset{(b)}{=} \sigma_{V_{k-1}}^2 + g^2\mathbb{E}[(m_{k-1}-\mathbb{E}[m_{k-1}|y_{[0,k-1]}])^2] \nn\\
&\qquad\qquad\overset{(c)}{\geq} \sigma_{V_{k-1}}^2 +g^2\sigma_{M_{k-1}}^2\,2^{-2C_{k-1}}\,,
\label{eq:mkyk1}
\end{align}
where $C_k\triangleq\sup I(m_k;y_{[0,k]})$. Here, (a) holds due to the iterated expectation rule and the Markov chain property, 
(b) holds since $\mathbb{E}[m_k|m_{k-1}]=\mathbb{E}[gm_{k-1}+v_{k-1}|m_{k-1}]=gm_{k-1}$, and (c) holds due to \cite[Lemma 11.3.1]{YukselBasarBook}. From \cite[Lemma 11.3.2]{YukselBasarBook},
$I(m_k;y_{[0,k-1]})$ is maximized with linear policies, and the lower bound of \eqref{eq:mkyk1}, $\mathbb{E}[(m_k-\mathbb{E}[m_k|y_{[0,k-1]}])^2] \geq \sigma_{V_{k-1}}^2 +g^2\sigma_{M_{k-1}}^2\,2^{-2C_{k-1}}\triangleq\sigma_{M_k}^2\,2^{-2 \widetilde{C}_k}$, is achievable through linear policies where $\sup I(m_k;y_{[0,k-1]})\triangleq\widetilde{C}_k={1\over2}\log_2\left(\sigma_{M_k}^2\over\sigma_{V_{k-1}}^2 +g^2\sigma_{M_{k-1}}^2 2^{-2C_{k-1}}\right)$. Thus, we have the following recursion on upper bounds on mutual information for the $N$-stage signaling game:
\begin{align*}
&C_k=\sup I(m_k;y_{[0,k]})=\widetilde{C}_k+\widehat{C}_k\nn\\
&={1\over2}\log_2\left(\sigma_{M_k}^2\over\sigma_{V_{k-1}}^2 +g^2\sigma_{M_{k-1}}^2 2^{-2C_{k-1}}\right)+{1\over2}\log_2\left(1+{P_k\over\sigma_{W_k}^2}\right)
\end{align*}
for $k\leq N-1$ with $C_0 = {1\over2}\log_2\left(1+{P_0\over\sigma_{W_0}^2}\right)$. Let the lower bound on $\mathbb{E} \left[\left(m_k-\mathbb{E}[m_k|y_{[0,k]}]\right)^2\right]$ be $\Delta_k$; i.e., $\mathbb{E} \left[\left(m_k-\mathbb{E}[m_k|y_{[0,k]}]\right)^2\right] \geq \sigma_{M_k}^2\,2^{-2 C_k}\triangleq \Delta_k$. Then the following recursion holds for the $N$-stage signaling game:
\begin{align}
\Delta_k &= {\sigma_{V_{k-1}}^2 +g^2 \Delta_{k-1}\over1+{P_k\over\sigma_{W_k}^2}} 
\text{ for } k=1,2,\ldots,N-1\nn
\end{align}
with $\Delta_0 = {\sigma_{M_0}^2\over1+{P_0\over\sigma_{W_0}^2}}$. Since $\Delta_k=\sigma_{M_k}^2\,2^{-2 C_k}$ by definition, $\Delta_k\leq\sigma_{M_k}^2$ for $k\leq N-1$.
In an equilibrium, since the decoder always chooses $u_k=\mathbb{E}[m_k|y_{[0,k]}]$ for $k\leq N-1$, the total encoder cost for the first stage can be lower bounded by $J_0^{e,lower} = \sum_{i=0}^{N-1} \left(\Delta_i+\lambda P_i + b^2\right)$. Now observe the following:
\begin{align*}
{\partial \Delta_l \over\partial P_k} = \begin{cases}
0 &\mbox{if } l<k \\
\begin{aligned}[c]
&g^2\left(1+{P_l\over\sigma_{W_l}^2}\right)^{-1}{\partial \Delta_{l-1} \over\partial P_k}-{1\over\sigma_{W_l}^2}{\partial P_l\over\partial P_k}\\
&\times\left(\sigma_{V_{l-1}}^2 +g^2 \Delta_{l-1}\right)\left(1+{P_l\over\sigma_{W_l}^2}\right)^{-2}
\end{aligned} &\mbox{if } l\geq k 
\end{cases}\,,
\end{align*}
where ${\partial P_l\over\partial P_k}=0$ for $l<k$ due to the information structure of the encoder. Then, ${\partial J_0^{e,lower} \over\partial P_{N-1}}\geq\lambda-{\sigma_{M_{N-1}}^2\over\sigma_{W_{N-1}}^2}$ can be obtained. 
If $\lambda>{\sigma_{M_{N-1}}^2\over\sigma_{W_{N-1}}^2}$, then ${\partial J_0^{e,lower} \over\partial P_{N-1}}>0$, which implies that $J_0^{e,lower}$ is an increasing function of $P_{N-1}$. For this case, in order to minimize $J_0^{e,lower}$, $P_{N-1}$ must be chosen as $0$; i.e., $P_{N-1}^*=0$. Then, for $\lambda>{\sigma_{M_{N-1}}^2\over\sigma_{W_{N-1}}^2}$, 
\begin{align*}
&{\partial J_0^{e,lower} \over\partial P_{N-2}} = \lambda\left(1+{\partial P_{N-1}\over\partial P_{N-2}}\right)+\sum_{i=N-2}^{N-1} {\partial \Delta_i \over\partial P_{N-2}}\nn\\
&=\lambda\left(1+{\partial P_{N-1}\over\partial P_{N-2}}\right)+\left(g^2\left(1+{P_{N-1}\over\sigma_{W_{N-1}}^2}\right)^{-1}+1\right){\partial \Delta_{N-2} \over\partial P_{N-2}}\nn\\
&\quad-\left(\sigma_{V_{N-2}}^2 +g^2 \Delta_{N-2}\right)\left(1+{P_{N-1}\over\sigma_{W_{N-1}}^2}\right)^{-2}{1\over\sigma_{W_{N-1}}^2}{\partial P_{N-1}\over\partial P_{N-2}} \nn\\
&\stackrel{(a)}{=} \lambda+{\partial \Delta_{N-2} \over\partial P_{N-2}}\left(g^2+1\right)\geq\lambda-{\sigma_{M_{N-2}}^2\over\sigma_{W_{N-2}}^2}\left(g^2+1\right)\,.
\end{align*}
Here, (a) holds since $P_{N-1}^*=0$ for $\lambda>{\sigma_{M_{N-1}}^2\over\sigma_{W_{N-1}}^2}$. If $\lambda>\max\Big\{{\sigma_{M_{N-1}}^2\over\sigma_{W_{N-1}}^2},{\sigma_{M_{N-2}}^2\over\sigma_{W_{N-2}}^2}\left(g^2+1\right)\Big\}$, then ${\partial J_0^{e,lower} \over\partial P_{N-2}}>0$, which implies that $J_0^{e,lower}$ is an increasing function of $P_{N-2}$. For this case, in order to minimize $J_0^{e,lower}$, $P_{N-2}$ must be chosen as $0$. By following the similar approach and assumptions on $\lambda$, since $P_{N-1}^*=P_{N-2}^*=\cdots=P_{k+1}^*=0$, we have the following: 
\begin{align*}
&{\partial J_0^{e,lower} \over\partial P_k} = \lambda+\sum_{i=k}^{N-1} {\partial \Delta_i \over\partial P_k}=\lambda+{\partial \Delta_k \over\partial P_k}\sum_{i=k}^{N-1}\prod_{j=k+1}^{i}g^2 \nn\\
&=\lambda-{\sigma_{V_{k-1}}^2 +g^2 \Delta_{k-1}\over\sigma_{W_k}^2}\left(1+{P_k\over\sigma_{W_k}^2}\right)^{-2}\sum_{i=k}^{N-1}\prod_{j=k+1}^{i}g^2\nn\\
&\geq\lambda-{\sigma_{M_k}^2\over\sigma_{W_k}^2}\sum_{i=0}^{N-k-1} g^{2i}\,,
\end{align*}
where $\prod_{i=k}^{l}=1$ if $k>l$. If $\lambda>{\sigma_{M_k}^2\over\sigma_{W_k}^2}\sum_{i=0}^{N-k-1} g^{2i}$, then ${\partial J_0^{e,lower}\over\partial P_k}>0$, which implies that $J_0^{e,lower}$ is an increasing function of $P_k$. For this case, in order to minimize $J_0^{e,lower}$, $P_k$ must be chosen as $0$. 

By combining all the results above, it can be deduced that if $\lambda>\max_{k\leq N-1}{\sigma_{M_k}^2\over\sigma_{W_k}^2}\sum_{i=0}^{N-k-1} g^{2i}$, the lower bound $J_0^{e,lower}$ of the encoder costs $J_0^e$ is minimized by choosing $P_0^*=P_1^*=\cdots=P_{N-1}^*=0$; that is, the encoder does not signal any output. Hence, the encoder engages in a non-informative equilibrium and the minimum cost becomes $J_0^e = J_0^{e,lower} = \left(\sum_{i=0}^{N-1}\sigma_{M_i}^2\right)+Nb^2$.  \hspace*{\fill}\qed

\section{Proof of Corollary~\ref{thm:itInfStage}} \label{appendixProofStackelbergSingalingInf}
For the infinite horizon case, it can be observed
\begin{align}
\inf_{\gamma^e} &\limsup_{N\rightarrow\infty} J^e(\gamma^e_{[0,N-1]},\gamma^d_{[0,N-1]}) \nn\\
&\geq \limsup_{N\rightarrow\infty} \inf_{\gamma^e_{[0,N-1]}} \sum_{i=0}^{N-1} \beta^i \left(\Delta_i+\lambda P_i + b^2\right)\nn\,.
\end{align}
Thus, $\limsup_{N\rightarrow\infty} \inf_{\gamma^e_{[0,N-1]}} \sum_{i=0}^{N-1} \beta^i \left(\Delta_i+\lambda P_i + b^2\right)$ is achieved at a non-informative equilibrium if $\lambda>\limsup_{N\rightarrow\infty}\max_{k\leq N-1}{\sigma_{M_k}^2\over\sigma_{W_k}^2}\sum_{i=0}^{N-k-1}\beta^i g^{2i}={\sigma_{M_k}^2\over\sigma_{W_k}^2}{1\over1-\beta g^2}$ for $\beta g^2<1$. Hence, if $\lambda\geq\max_{k=0,1,\ldots}{\sigma_{M_k}^2\over\sigma_{W_k}^2}{1\over1-\beta g^2}$, then the lower bound $J_0^{e,lower}$ of the encoder costs $J_0^e$ is minimized by choosing $P_0=P_1=\cdots=0$, and the minimum cost becomes $J_0^e = J_0^{e,lower} = \sum_{i=0}^{\infty}\beta^i\left(\sigma_{M_i}^2+b^2\right)$ at this non-informative equilibrium. 
\hspace*{\fill}\qed

\section{Proof of Theorem~\ref{thm:multiStackelberg}} \label{appendixProofMultiStackelberg}
We will follow an approach similar to that in \cite{ZaidiChapter} which restricted the analysis to a team problem and a scalar channel; \cite{ZaidiChapter} in turn builds on \cite{basban94}, which considers continuous time systems. Since the ($k+1$)st stage encoder policy only transmits the linearly encoded innovation by assumption, $\vxx_k=\gamma_k^e(\mathcal{I}_k^e)=A_k \vmt_k$ where $A_k$ is an $n \times n$ matrix for $k\leq N-1$. Then the decoder receives $\vyy_k=\vxx_k+\vww_k=A_k \vmt_k+\vww_k$ and applies the action $\vuu_k=\gamma_k^d(\mathcal{I}_k^d)=\mathbb{E}[\vmm_k|\vyy_{[0,k]}]$ to minimize his stage-wise cost $\|\ver_k\|^2 \triangleq \mathbb{E}[\|\vmm_k-\vuu_k\|^2]=\mathbb{E}[(\vmm_k-\vuu_k)^T(\vmm_k-\vuu_k)]=\tr\left(\Sigma_{\ver_k}\right)$ for $k\leq N-1$ where $\Sigma_{\mathbf{R}}$ stands for the covariance matrix of the random variable ${\mathbf{R}}$; i.e., $\Sigma_{\mathbf{R}} \triangleq \mathbb{E}[(\mathbf{R}-\mathbb{E}[\mathbf{R}])(\mathbf{R}-\mathbb{E}[\mathbf{R}])^T]$. Due to the orthogonality of $\vmt_k$ and $\vyy_{[0,k-1]}$, and the iterated expectations rule,  $\vuu_k=\mathbb{E}[\vmm_k|\vyy_{[0,k]}]=\mathbb{E}\left[\vmt_k+\mathbb{E}[\vmm_k|\vyy_{[0,k-1]}]|\vyy_{[0,k]}\right]=\mathbb{E}[\vmt_k|\vyy_k]+\mathbb{E}[\vmm_k|\vyy_{[0,k-1]}]$, and it follows that $\ver_k=\vmm_k-\vuu_k=\vmm_k-\mathbb{E}[\vmt_k|\vyy_k]-\mathbb{E}[\vmm_k|\vyy_{[0,k-1]}]=\vmt_k-\mathbb{E}[\vmt_k|\vyy_k]$. Since $\mathbb{E}[\vmt_k|\vyy_k]=\Sigma_{\vmt_k} A_k^T \left(\Sigma_{\vyy_k}\right)^{-1} \vyy_k$, the stage-wise cost of the decoder becomes the trace of the following:
\begin{align}
\Sigma_{\ver_k} &= 
\Sigma_{\vmt_k}-\Sigma_{\vmt_k} A_k^T \left(A_k\Sigma_{\vmt_k} A_k^T + \Sigma_{\vww_k}\right)^{-1} A_k \Sigma_{\vmt_k} \nn\\
&\overset{(a)}{=} \Sigma_{\vmt_k}^{1/2}\left(I+H_k^TH_k\right)^{-1}\Sigma_{\vmt_k}^{1/2}\,,
\label{eq:sigmaError}
\end{align}
where (a) follows by utilizing the matrix inversion lemma, \mbox{$(I+UWV)^{-1}=I-U(W^{-1}+VU)^{-1}V$}, where $U=H_k^T$, $W=I$, $V=H_k$, and $H_k\triangleq\Sigma_{\vww_k}^{-1/2} A_k \Sigma_{\vmt_k}^{1/2}$. Since $\mathbb{E}[\vmm_k|\vyy_{[0,k-1]}]$ and $\vmt_k$ are orthogonal, $\mathbb{E}[\vmm_k|\vyy_k]=\Sigma_{\vmt_k} A_k^T\left(\Sigma_{\vyy_k}\right)^{-1} \vyy_k$.
Then the innovation and its covariance matrix can be expressed recursively as follows:
\begin{align}
\vmt_{k+1} 
&= G\vmm_k+\vvv_k - \mathbb{E}[\vmm_{k+1}|\vyy_{[0,k-1]}] - \mathbb{E}[\vmm_{k+1}|\vyy_k] \nn\\
&= G\vmt_k+\vvv_k - G \Sigma_{\vmt_k} A_k^T \left(\Sigma_{\vyy_k}\right)^{-1} \vyy_k \,,\nn\\
\Sigma_{\vmt_{k+1}}&=G\Sigma_{\vmt_k}^{1/2}\left(I+H_k^TH_k\right)^{-1}\Sigma_{\vmt_k}^{1/2}G^T+\Sigma_{\vvv_k}  \,.
\label{eq:sigmaMkRecursion}
\end{align}
Further, the stage-wise cost of the encoder is 
\begin{align}
\mathbb{E}&\left[\|\vmm_k-\vuu_k-\vb\|^2+\lambda\|\vxx_k\|^2\right] = \tr\left(\Sigma_{\ver_k}\right) + \tr\left(\lambda\Sigma_{\vxx_k}\right) + \|\vb\|^2\nn\\
&=\tr\left(\Sigma_{\vmt_k}\left(I+H_k^TH_k\right)^{-1}\right) + \tr\left(\lambda H_k^T \Sigma_{\vww_k}H_k\right)+ \|\vb\|^2\,.
\label{eq:traceEncoder} 
\end{align}
Let the value functions be $V_k\left(\Sigma_{\vmt_k}\right)=\tr\left(K_k \Sigma_{\vmt_k} + L_k\right)$ with $K_k$ being diagonal. In the following we show that there exist such $V_k$ that satisfy Bellman's principle of optimality \cite[Theorem 3.2.1]{HernandezLermaMCP}. Here, $V_k\left(\Sigma_{\vmt_k}\right) \triangleq \min_{H_k} \Bigg(\mathcal{C}_k\left(\Sigma_{\vmt_k},H_k\right)+V_{k+1}\left(\Sigma_{\vmt_{k+1}}\right)\Bigg)$, and $\mathcal{C}_k\left(\Sigma_{\vmt_k},H_k\right)\triangleq \tr\left(\Sigma_{\ver_k}\right) + \tr\left(\lambda\Sigma_{\vxx_k}\right) +\|\vb\|^2$ is the stage-wise cost of the $k$-th stage encoder. Then, since $K_{k+1}$ and $L_{k+1}$ do not depend on $H_k$,
\begin{align}
&V_k\left(\Sigma_{\vmt_k}\right) = \min_{H_k} \Bigg(\mathcal{C}_k\left(\Sigma_{\vmt_k},H_k\right)+V_{k+1}\left(\Sigma_{\vmt_{k+1}}\right)\Bigg) \nn\\
&\overset{(a)}{=} \tr\left(K_{k+1} \Sigma_{\vvv_k} + L_{k+1}\right)+\|\vb\|^2 + \min_{H_k} \Bigg(\underbrace{\tr\left(\lambda H_k^T \Sigma_{\vww_k}H_k\right)}_{\triangleq\mathfrak{C}}\nn\\
&+\underbrace{\tr\Big(\Sigma_{\vmt_k}^{1/2}\left(G^TK_{k+1} G+I\right)\Sigma_{\vmt_k}^{1/2}\left(I+H_k^TH_k\right)^{-1}\Big)}_{\triangleq\mathfrak{P}}\Bigg) \;,
\label{eq:valueEncoder}
\end{align}
where (a) follows by substituting $\mathcal{C}_k\left(\Sigma_{\vmt_k},H_k\right)$ using \eqref{eq:traceEncoder} and employing \eqref{eq:sigmaMkRecursion}. The equivalent problem of the minimization of $\mathfrak{P}$ over $H_k$ under the constraint $\mathfrak{C}=\mu_k$ is considered in \cite{bas80}, and for every $\mu_k=\mathfrak{C}$, the optimal $H_k$ is found as $H_k^*=\Pi_k \zeta_k P_k^T$, where $\Pi_k$ is a unitary matrix such that $\Pi_k^T \left(\lambda\Sigma_{\vww_k}\right) \Pi_k = \text{ diag} \left(\tau_{k_1},\tau_{k_2},\ldots,\tau_{k_n}\right)\triangleq\widetilde{\Pi}_k$, $P_k$ is a unitary matrix such that $P_k^T\left(\Sigma_{\vmt_k}^{1/2}\left(G^TK_{k+1} G+I\right)\Sigma_{\vmt_k}^{1/2}\right)P_k=\text{ diag} \left(\nu_{k_1},\nu_{k_2},\ldots,\nu_{k_n}\right)$, and $\zeta_k$ is another diagonal matrix. Then the recursion of the innovation's covariance matrix \eqref{eq:sigmaMkRecursion} can be expressed as
\begin{align}
\Sigma_{\vmt_{k+1}}
&= G\Sigma_{\vmt_k}^{1/2}\left(I+P_k\zeta_k^T\zeta_k P_k^T\right)^{-1}\Sigma_{\vmt_k}^{1/2}G^T+\Sigma_{\vvv_k} \;.
\label{eq:innovRecur}
\end{align}
Then, by utilizing $H_k^*=\Pi_k \zeta_k P_k^T$, $\Pi_k^T\Pi_k=I$, $\widetilde{\Pi}_k=\Pi_k^T \left(\lambda\Sigma_{\vww_k}\right) \Pi_k$, and $P_k^TP_k=I$ in \eqref{eq:valueEncoder}, 
\begin{align}
&V_k\left(\Sigma_{\vmt_k}\right)\overset{(a)}{=}  \tr\left(K_{k+1} \Sigma_{\vvv_k} + L_{k+1}+\vb\vb^T\right) + \tr\left(\zeta_k^T \widetilde{\Pi}_k \zeta_k\right) \nn\\
&+\tr\Bigg(\Big(G^TK_{k+1} G+I\Big)\Big(I-\zeta_k^T\left(I+\zeta_k \zeta_k^T\right)^{-1}\zeta_k\Big)\Sigma_{\vmt_k}\Bigg) \,, \label{eq:valueFunctionIteration}
\end{align}
where (a) follows from the matrix inversion lemma by choosing $U=P_k\zeta_k^T$, $W=I$, and $V=\zeta_k P_k^T$ in \mbox{$(I+UWV)^{-1}=I-U(W^{-1}+VU)^{-1}V$}, and the diagonality of $\Sigma_{\vmt_k}$, $P_k$ and $\zeta_k$: Since $G$, $\Sigma_{\vmt_0}$, $K_k$ and $\Sigma_{\vvv_k}$ are diagonal for $k\leq N-1$, it is always possible to find a unitary diagonal $P_0$ such that $P_0^T\left(\Sigma_{\vmt_0}^{1/2}\left(G^TK_1 G+I\right)\Sigma_{\vmt_0}^{1/2}\right)P_0=\text{ diag} \left(\nu_{0_1},\nu_{0_2},\ldots,\nu_{0_n}\right)$, which makes $\Sigma_{\vmt_1}$ diagonal by \eqref{eq:innovRecur}. Similarly, $\Sigma_{\vmt_k}$ and $P_k$ are diagonal for $k\leq N-1$. In order to satisfy \eqref{eq:valueFunctionIteration}, since $V_N\left(\Sigma_{\vmt_N}\right)=0$, we choose $K_N=L_N=0$. Then, for $k\leq N-1$, $\{K_{k+1}, L_{k+1}\}$ is chosen according to
\begin{align}
K_k &= \Big(G^TK_{k+1} G+I\Big)\left(I-\zeta_k^T\left(I+\zeta_k \zeta_k^T\right)^{-1}\zeta_k\right) \;,\nn\\
L_k &= K_{k+1} \Sigma_{\vvv_k} + L_{k+1}+\zeta_k^T \widetilde{\Pi}_k \zeta_k+\vb\vb^T \;.
\label{eq:backwardIteration}
\end{align}
Now we verify that the diagonal $K_k$ matrices satisfy the dynamic programming recursion.

When the channel is scalar, for $k\leq N-1$, 
\begin{align*}
\begin{split}
K_k &= \Big(G^TK_{k+1} G+I\Big)\times\text{ diag}\left({\lambda\sigma^2_{W_k}\over 1+\lambda\sigma^2_{W_k}},1,1,\ldots,1\right) \,,\\
L_k &= K_{k+1} \Sigma_{\vvv_k} + L_{k+1}+\text{ diag}\left(1,0,0,\ldots,0\right)+\vb\vb^T \,. 
\end{split}
\end{align*}
The optimal linear encoder policy is  $A_k^*=\Sigma_{\vww_k}^{1/2} \zeta_k P_k^T \Sigma_{\vmt_k}^{-1/2}$ since $\Pi_k=1$ and \mbox{$\zeta_k=\left[{1\over \sqrt{\lambda\sigma^2_{W_k}}},0,\ldots,0\right]$}. \hspace*{\fill}\qed
\bibliographystyle{IEEEtran}        
\bibliography{SerkanBibliography}

\end{document}